\documentclass[a4paper]{scrartcl}

\usepackage[T1]{fontenc}

\usepackage{amsthm}
\usepackage{amsmath}
\usepackage{amsfonts}
\usepackage{mathtools}
\usepackage{amssymb}

\usepackage{caption}

\usepackage{enumerate}

\usepackage{amscd} 

\usepackage{tikz}
\usetikzlibrary{calc}
\usetikzlibrary{cd} 

\usepackage[all]{xy}
\SelectTips{cm}{}

\usepackage{graphicx}
\usepackage[]{xcolor}

\usepackage[pagebackref,
    ,pdfborder={0 0 0}
  ,urlcolor=black,hypertexnames=false,
pdftitle={Torsion homology growth and cheap rebuilding of inner-amenable groups}]{hyperref}

\usepackage[]{scrlayer-scrpage}

\automark[section]{section}
\clearpairofpagestyles
\ihead{\headmark}
\ohead{\pagemark}

\newtheoremstyle{ggt}{}{}{\itshape}{}{\sffamily\bfseries}{.}{ }{}
\newtheoremstyle{ggtdefinition}{}{}{}{}{\sffamily\bfseries}{.}{ }{}

\theoremstyle{ggt}
\newtheorem{thm}{Theorem}[section]
\newtheorem{lemma}[thm]{Lemma}
\newtheorem{cor}[thm]{Corollary}
\newtheorem{prop}[thm]{Proposition}

\theoremstyle{ggtdefinition}
\newtheorem{defi}[thm]{Definition}
\newtheorem{example}[thm]{Example}
\newtheorem{question}[thm]{Question}

\newtheorem{remark}[thm]{Remark}
\newtheorem*{ack}{Acknowledgements}

\def\N{\mathbb{N}}
\def\Z{\mathbb{Z}}
\def\Q{\mathbb{Q}}

\def\actson{\curvearrowright}

\newcommand{\field}{\ensuremath{\mathbb{K}}}

\newcommand{\careb}{cheap $\alpha$-rebuilding property}
\newcommand{\coreb}{cheap 1-rebuilding property}
\newcommand{\ctreb}{cheap 2-rebuilding property}

\newcommand{\namen}{non-amenable}

\newcommand{\ABFG}{Abért, Bergeron, Fr\k{a}czyk and Gaboriau}

\def\qand{\quad\text{and}\quad}


\hyphenation{com-mut-ing}

\makeatletter

\makeatother

\makeatletter
\def\blfootnote{\xdef\@thefnmark{}\@footnotetext}
\makeatother
\def\draftinfo{}
\date{\today%
  \protect\blfootnote{\copyright{\ M.~Uschold~2022}. 
    This work was supported by the CRC~1085 \emph{Higher Invariants} 
    (Universit\"at Regensburg, funded by the DFG). 
    \\
    Keywords: cheap rebuilding, inner-amenability, torsion homology growth, $\ell^2$-Betti numbers
    \\
    MSC~2020 classification: 57M07, 20E26, 43A07, 
    \draftinfo}}



\allowdisplaybreaks[1]
  
\begin{document}

\title{Torsion homology growth and cheap rebuilding of~inner-amenable groups}
\author{Matthias Uschold}

\maketitle 

\thispagestyle{empty}

\begin{abstract}
	We prove that virtually torsion-free, residually finite groups that are inner-amenable and non-amenable 
	 have  the \coreb, a notion recently introduced by \ABFG. As a consequence, the first $\ell^2$-Betti number with arbitrary 
	 field coefficients
	and log-torsion in degree $1$ vanish for these groups. This extends results previously 
	known for amenable groups to inner-amenable groups.	
	We use a structure theorem of Tucker-Drob for inner-amenable groups
	showing the existence of a chain of $q$-normal subgroups.
\end{abstract}

\section{Introduction}

In 1994, Lück proved the approximation theorem about $\ell^2$-Betti numbers. Its 
group-theoretic version asserts that for a residually finite group $\Gamma$ with finite type model
for $E\Gamma$, and any residual chain $(\Gamma_i)_{i\in \N}$ of $\Gamma$ (i.e., a chain of nested, normal, finite 
index subgroups in $G$ whose intersection is trivial), the following holds: for all $n\in \N$,
we have 
\cite[Theorem~0.1]{Lueck-Approx-Thm}\ \cite[Theorem~5.3]{Kammeyer-L2}
\[
	b_n^{(2)}(\Gamma) = \lim_{i\to\infty} \frac{b_n(\Gamma_i)}{[\Gamma:\Gamma_i]}.
\] 
Here,  $b_n(\cdot)$ denotes the (ordinary) $n$-th Betti number and $b_n^{(2)}(\cdot)$ the $\ell^2$-Betti number as defined by Atiyah \cite{Atiyah-L2} originally for spaces (for an introduction, see the book by Kammeyer \cite{Kammeyer-L2}). 

We obtain a different viewpoint by taking the right hand side of this equality as a definition.
The main advantage is that we can replace the $n$-th ordinary Betti number by different 
homology-related invariants, e.g. we can consider the invariants 
\[
	\limsup_{i\to\infty} \frac{b_n(\Gamma_i,\field)}{[\Gamma:\Gamma_i]} \quad \text{and}
	\quad
	\limsup_{i\to\infty} \frac{\log |H_n(\Gamma_i, \Z)_\mathrm{tors}|}{[\Gamma:\Gamma_i]},
\]
where $\field$ is any field and $\operatorname{tors}$ 
denotes the torsion subgroup of an abelian group. 
We call the resulting invariant the \emph{gradient} of the invariant
that we insert instead of ordinary Betti numbers. 
For these gradient invariants, a priori, we do not know whether the $\limsup$ is actually 
a proper limit or if its value depends on the chosen residual chain.
The Betti number gradients do however depend on the field: 
Avramidi, Okun and Schreve exhibited an example of a right-angled Artin group
where the $\Q$-Betti number gradients (i.e.\ the $\ell^2$-Betti numbers) and the $\mathbb{F}_2$-Betti
number gradients do \emph{not} coincide \cite[Corollary~2]{AOS-tors-hom-growth}.

An efficient way to show vanishing of Betti number gradients for all fields and vanishing 
of log-torsion gradients is via the \careb, recently introduced by \ABFG\ \cite{ABFG}. Roughly, for a fixed $\alpha\in \N$, a group~$\Gamma$ has the \careb\ if 
for all Farber sequence $(\Gamma_i)_{i\in \N}$, the following holds in a uniform way: Because the $\Gamma_i$ 
are finite index subgroups, we obtain a tower of finite degree coverings $B\Gamma_i \to B\Gamma$. 
If $i$ is large enough, we can find a model of $B\Gamma_i$ (i.e.\ a CW-complex that of the
homotopy type of $B\Gamma_i$) with few cells up to dimension~$\alpha$, maintaining tame norms
on the cellular boundary operators and homotopies. 
(We will give a more precise definition of the \careb\ in Appendix~\ref{sec:careb}.)

In this article, we will prove that certain inner-amenable groups have the \coreb.
Inner-amenable groups are defined as follows and generalise the notion of amenability of groups.

\begin{defi}[inner amenability, {{\cite[Definition~0.7]{Tucker-Drob2020}}}]
	\label{def:inner-amen}
	A group $\Gamma$ is \emph{inner-amenable} if the conjugation action of $\Gamma$ on itself 
	admits an atomless invariant mean, i.e., if there is a finitely additive probability measure
	$\mu\colon {\cal P}(\Gamma) \to [0,1]$ such that for all subsets $A\subset \Gamma$ and
	 $g\in \Gamma$, we have 
	\[
		\mu(g\cdot A\cdot g^{-1}) = \mu (A)
	\] and additionally $\mu(\{x\}) = 0$ for all $x\in \Gamma$.
\end{defi}

We collect examples of such groups in Section~\ref{sec:inner-amenable}. Our results
follow from a structure result of Tucker-Drob \cite{Tucker-Drob2020} for this class of groups
(Theorem~\ref{thm:structure-inner-am}) about the existence of $q$-normal subgroups, 
suggesting a strategy for proving properties for 
inner-amenable groups (Theorem~\ref{thm:Tucker-Drob-method}).

\subsection*{Main results}

Recall that a group is \emph{virtually torsion-free} if there exists a torsion-free
subgroup of finite index.

\begin{thm}[Theorem~\ref{thm:in-am-coreb}] 
	\label{thm:main:in-am-coreb}
	Let $\Gamma$ be a finitely generated, virtually torsion-free, residually finite group
	that is inner-amenable and \namen. 
	Then, $\Gamma$ has the \coreb.
\end{thm}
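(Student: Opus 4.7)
My plan is to apply the Tucker--Drob strategy (Theorem~\ref{thm:Tucker-Drob-method}) supplied by the structure theorem for inner-amenable groups: since $\Gamma$ is inner-amenable and \namen, in order to establish the \coreb\ for $\Gamma$ it suffices to verify the property for the amenable building block and to show it is preserved along the resulting chain of $q$-normal inclusions. I would also take care at the outset to reduce to the case where $\Gamma$ is torsion-free, using that the \coreb\ passes well between commensurable groups, and that inner-amenability and non-amenability are preserved under passing to a finite-index subgroup.

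For the base step, amenable groups (satisfying the relevant finiteness hypotheses) have the \careb\ for all $\alpha$ by the work of \ABFG; in particular they have the \coreb. Combined with Tucker--Drob's structure theorem this gives the starting point of the induction without extra effort.

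The technical heart of the proof is an inheritance lemma that I would need to establish: if $H\le G$ is a $q$-normal inclusion and $H$ has the \coreb, then so does $G$. Concretely, $q$-normality yields generators $g_j$ of $G$ for which the intersections $K_j := g_j H g_j^{-1}\cap H$ are infinite. Given a Farber chain in $G$, intersecting with $H$ and its conjugates produces Farber chains to which the \coreb-data for $H$ apply; these data can then be glued, via a Mayer--Vietoris-style construction indexed by the $K_j$, into finite-type $1$-skeletal models for the finite covers of $BG$. The infiniteness of the $K_j$ is precisely what keeps the boundary and homotopy norms on the $1$-skeleton tame after the gluing.

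The main obstacle is carrying out this assembly with uniform control along the Farber chain of $G$: one has to quantify how the norms degrade under gluing, and verify that the estimates remain tame as the index grows. The restriction to $\alpha=1$ substantially simplifies the combinatorics, since only the $1$-skeleton is constrained; even so, the bookkeeping required to propagate residual finiteness and virtual torsion-freeness through the induction, and to ensure that an amenable $q$-normal subgroup at the bottom of the Tucker--Drob chain can in fact be used as the base case, is where I expect to spend most of the technical effort.
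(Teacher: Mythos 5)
Your plan follows the paper's overall architecture (Tucker--Drob's structure theorem plus Theorem~\ref{thm:Tucker-Drob-method}), but two of its load-bearing steps are not sound as stated. First, the base case: you assert that amenable groups have the \careb\ for all $\alpha$ by the work of \ABFG, and you explicitly plan to use ``an amenable $q$-normal subgroup at the bottom of the Tucker--Drob chain'' as the starting point. This is not available: it is an open question whether amenable groups have even the \coreb\ \cite[Question~10.21]{ABFG}. What is known, and what the paper uses, is only that $\Z$ has the \careb\ \cite[Lemma~10.10]{ABFG} (Proposition~\ref{prop:Z-careb}); fortunately Theorem~\ref{thm:structure-inner-am} produces a chain $\Z \cong \langle g\rangle \le Z(L)\trianglelefteq L\le_q K\le_q\Gamma$ whose bottom is infinite cyclic, so the base case you actually need is $\Z$, not ``amenable''. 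Relatedly, your preliminary reduction to the torsion-free case is both unjustified as written (commensurability invariance of the \coreb\ would need a citation or proof) and unnecessary: virtual torsion-freeness is consumed inside the structure theorem, where it guarantees an infinite-order element $g$ with \namen\ centraliser and makes the intersections $g_i K g_i^{-1}\cap K$ infinite.

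Second, and more seriously, your key inheritance step --- if $H\le_q G$ and $H$ has the \coreb, then so does $G$ --- is only sketched, and the part you defer (``a Mayer--Vietoris-style construction indexed by the $K_j$'' with uniform norm control along Farber sequences of $G$) is precisely the hard analytic content; as proposed you would essentially be re-proving the gluing machinery of \ABFG\ from scratch, including the claim that intersecting a Farber sequence of $G$ with $H$ and its conjugates yields data to which the \coreb\ of $H$ applies uniformly. The paper's Lemma~\ref{lemma:coreb-inherit-q-normal} avoids all of this: from a finite generating set $S$ witnessing $q$-normality one forms the coset graph with vertex set $G/L$ and edges $\{\{gL,gsL\}\mid g\in G,\ s\in S\}$, on which $G$ acts cocompactly with connected quotient, vertex stabilisers conjugate to $L$, and edge stabilisers containing $g\,(L\cap sLs^{-1})\,g^{-1}$, hence infinite (Lemma~\ref{lemma:act-q-normal-direct}); then the criterion of \ABFG\ for cocompact actions on graphs with infinite edge stabilisers and \coreb\ vertex stabilisers (Proposition~\ref{prop:coreb-act-graph}, \cite[Example~10.12]{ABFG}) gives the \coreb\ for $G$ outright. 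Without either invoking that criterion (or an equivalent result) or actually carrying out your gluing with quantitative bounds, the proposal has a genuine gap at its central step.
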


In particular, we have the following corollary.

\begin{cor}
	\label{cor:main:vanishing}
	Let $\Gamma$ be a finitely \emph{presented}, virtually torsion-free, residually finite, inner-amenable group. 
	Then, for every Farber sequence $(\Gamma_i)_{i\in \N}$ and every coefficient field $\field$,
	 we have
	\[
		\lim_{i\to\infty} \frac{\dim_K H_1(\Gamma_i, \field)}{[\Gamma:\Gamma_i]} = 0 \qand
		\lim_{i\to\infty} \frac{\log |H_1(\Gamma_i, \Z )_\mathrm{tors}|}{[\Gamma:\Gamma_i]} = 0.
	\]
\end{cor}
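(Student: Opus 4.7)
The plan is to split on the amenability of $\Gamma$, show that in either case $\Gamma$ has the \coreb, and then invoke the general formalism of \ABFG\ to deduce the two vanishing statements.

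If $\Gamma$ is \namen, then $\Gamma$ is finitely generated (being finitely presented), virtually torsion-free, residually finite, inner-amenable, and \namen, so Theorem~\ref{thm:main:in-am-coreb} applies and yields the \coreb\ for $\Gamma$. If instead $\Gamma$ is amenable, then $\Gamma$ is infinite by the atomless condition in Definition~\ref{def:inner-amen}, and by the theorem of \ABFG\ that residually finite amenable groups of finite type enjoy the \careb\ in every degree, $\Gamma$ in particular has the \coreb. So in either case $\Gamma$ has the \coreb.

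I would then feed this into the standard implication from \ABFG: the \coreb\ for $\Gamma$ implies that for every Farber sequence $(\Gamma_i)_{i\in\N}$ and every coefficient field~$\field$, both gradients
\[
	\frac{\dim_\field H_1(\Gamma_i,\field)}{[\Gamma:\Gamma_i]} \qand
	\frac{\log |H_1(\Gamma_i,\Z)_{\mathrm{tors}}|}{[\Gamma:\Gamma_i]}
\]
tend to zero. The underlying mechanism is that the \coreb\ furnishes, for all sufficiently large~$i$, a CW~model of $B\Gamma_i$ with few cells in dimensions $0,1,2$ and with a priori bounded norms on the cellular boundaries $\partial_1,\partial_2$ and the comparing homotopies. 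Control of the $2$-skeleton in this quantitative sense is exactly what is needed to bound $\dim_\field H_1(\Gamma_i,\field)$ and $\log|H_1(\Gamma_i,\Z)_{\mathrm{tors}}|$ sublinearly in $[\Gamma:\Gamma_i]$; the uniformity along the Farber sequence upgrades the $\limsup$ to an honest limit. The finite presentation hypothesis enters precisely here, to guarantee that $B\Gamma$ admits a finite $2$-skeleton on which the rebuilding procedure can operate.

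The main obstacle is entirely contained in Theorem~\ref{thm:main:in-am-coreb}; the present corollary is then a formal consequence of the ABFG formalism. The only technical point that warrants a careful verification is that cheap $1$-rebuilding, rather than \ctreb, is already enough to kill the integral torsion in degree~$1$, which is a matter of the convention that a degree-$\alpha$ rebuilding controls the cellular data in dimensions up to~$\alpha+1$.
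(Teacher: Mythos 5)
Your non-amenable branch is exactly the paper's argument: finitely presented gives type $F_2$, Theorem~\ref{thm:main:in-am-coreb} gives the \coreb, and \cite[Theorem~10.20]{ABFG} then yields both vanishing statements for every Farber sequence. The problem is the amenable branch. There is no theorem of \ABFG\ asserting that residually finite amenable groups of finite type have the \careb\ (in any degree, let alone all of them); whether even the \coreb\ holds for all amenable groups is precisely \cite[Question~10.21]{ABFG}, which the paper explicitly records as open right after Corollary~\ref{cor:main:vanishing}. So your claim ``$\Gamma$ amenable $\Rightarrow$ $\Gamma$ has the \coreb'' is unsupported, and the argument cannot be routed through the rebuilding formalism in that case.

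The amenable case has to be handled by different (known) inputs, as in the paper: the log-torsion gradient vanishes for amenable groups by Kar--Kropholler--Nikolov; for the Betti number gradients one first reduces arbitrary field coefficients $\field$ to $\Q$ (Lück's approximation results for amenable groups), and then combines Lück's approximation theorem with the Cheeger--Gromov vanishing of $\ell^2$-Betti numbers of amenable groups. With that replacement your case split is correct; the rest of your write-up (including the remark that the \coreb\ together with type $F_2$ suffices to control degree-$1$ torsion, which is the content of \cite[Theorem~10.20]{ABFG}) is fine.
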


\begin{proof}
	If $\Gamma$ is amenable, the claims are already known: The second claim
	about torsion growth was proved by Kar, Kropholler and Nikolov \cite[Theorem~1]{Kar-Kropholler-Nikolov}. To derive the 
	first claim, we can apply \cite[Remark~1.3]{Lueck-Approx-L2-2013} and we obtain that it suffices to prove
	the claim for $K=\Q$. By Lück's approximation theorem \cite{Lueck-Approx-Thm}, 
	it suffices to show
	that $\ell^2$-Betti numbers of amenable groups vanish (in degree $1$) \cite[Theorem~0.2]{Cheeger-Gromov}.
	
	If $\Gamma$ is \namen, 
	by Theorem~\ref{thm:main:in-am-coreb}, $\Gamma$ has the cheap $1$-rebuilding property. 
	Because $\Gamma$ is finitely
	presented, it is of type $F_2$. Thus, the corollary follows from the 
	work of \ABFG\ \cite[Theorem~10.20]{ABFG}.
\end{proof}

Note that we prove a stronger assertion for inner-amenable groups that are \namen. 
It is unknown if all amenable groups have the \coreb\ \cite[Question~10.21]{ABFG}.

By work of Chifan, Sinclair and Udrea, using
ergodic-theoretic methods, it was already known that
for all countable inner-amenable groups, the first $\ell^2$-Betti number
vanishes \cite[Corollary~D]{Chifan-Sinclair-Udrea2016}. Later, Tucker-Drob proved that 
the \emph{cost} of these
groups is equal to $1$ \cite[Theorem~5]{Tucker-Drob2020}, thus implying the same result.
In this article, we obtain a more restrictive result, as we require the groups in
question to be virtually torsion-free and residually finite. 
However, we can present a topological proof
without using ergodic theoretic methods.

We point out that the proof of the amenable case in
 Corollary~\ref{cor:main:vanishing} only uses results that hold in all degrees (instead of just 
 degree~$1$). It is thus natural to ask the following question.

\begin{question}
	\label{q:higher-degrees-careb}
	Let $\alpha\in \N$ and let $\Gamma$ be a virtually torsion-free, residually finite, inner-amenable group of type $F_\alpha$. Does $\Gamma$ have the \careb? 
\end{question}

If the answer should be negative, at least the analogue of Corollary~\ref{cor:main:vanishing}
could be true.

\begin{question}
	\label{q:higher-degrees-vanish}
	If additionally, $\Gamma$ is of type $F_{\alpha+1}$, does the following hold? 
	For every Farber sequence $(\Gamma_i)_{i\in \N}$ and every coefficient field $\field$, we have
	\[
		\lim_{i\to\infty} \frac{\dim_K H_n(\Gamma_i, \field)}{[\Gamma:\Gamma_i]} = 0 \quad \text{and} \quad 
		\lim_{i\to\infty} \frac{\log |H_n(\Gamma_i, \Z )_\mathrm{tors}|}{[\Gamma:\Gamma_i]} = 0.
	\]
\end{question}

For right-angled Artin groups, we have a positive answer.

\begin{prop}[Corollary~\ref{cor:raag}] 
	\label{prop:main:raag}
	Let $\Gamma$ be a right-angled Artin group.
	If $\Gamma$ is inner-amenable, then $\Gamma$ has the \careb\ for all $\alpha\in \N$.
	In particular, the conclusions of Question~\ref{q:higher-degrees-vanish} hold for $\Gamma$.
\end{prop}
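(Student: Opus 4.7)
The plan is to first classify the inner-amenable right-angled Artin groups via Tucker-Drob's structure theorem, and then feed the resulting direct product structure into the machinery of \cite{ABFG}. The key claim of the first step is that an inner-amenable RAAG $A_L$ admits a decomposition $A_L \cong \Z \times A_{L'}$; equivalently, the defining graph $L$ contains a cone vertex (a vertex adjacent to every other vertex), and $L'$ is the full subgraph on the remaining vertices. One direction is immediate: a cone vertex generates a copy of $\Z$ inside the centre of $A_L$, so any atomless mean on this central $\Z$ is automatically conjugation invariant on all of $A_L$. For the converse I would apply Theorem~\ref{thm:structure-inner-am} to produce a chain of $q$-normal infinite subgroups of $A_L$, and combine this with the well-known rigidity of centralisers in RAAGs --- every centraliser is a parabolic subgroup --- in order to extract a standard generator that commutes with every other generator, i.e.\ a cone vertex of $L$.

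Given the decomposition $A_L = \Z \times A_{L'}$, the Salvetti complex of $A_L$ splits as a Cartesian product $S^1 \times \mathrm{Sal}(L')$, and any Farber chain of $A_L$ induces compatible finite-index data on both factors. I would then invoke a product theorem for the cheap rebuilding property from \cite{ABFG} to transfer the cheap $\alpha$-rebuilding of the amenable factor $\Z$ (which is easy since $\Z$ is of type $F$ with $\R$ as universal cover, so that Følner intervals provide the rebuilding) to $A_L$, using that the second factor $A_{L'}$ is of type $F_\infty$ with a finite, fixed cell structure. Concretely, one crosses Følner intervals in the $\Z$-direction with the fixed finite cells of $\mathrm{Sal}(L')$; the resulting rebuilding data have uniformly small norms in every degree, which is what the cheap $\alpha$-rebuilding property for every $\alpha$ requires.

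Finally, once $A_L$ has the cheap $\alpha$-rebuilding property in every degree, the statements of Question~\ref{q:higher-degrees-vanish} follow at once from \cite[Theorem~10.20]{ABFG} applied degree by degree, just as in the proof of Corollary~\ref{cor:main:vanishing}. The main obstacle I expect is the classification step: translating the analytic hypothesis of inner-amenability into the combinatorial assertion that $L$ contains a cone vertex. Steps two and three are then essentially formal applications of the existing product and rebuilding results in \cite{ABFG}.
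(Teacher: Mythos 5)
Your overall strategy coincides with the paper's: reduce to a splitting $\Gamma \cong \Z \times \Lambda$ and transfer the \careb\ of the central $\Z$-factor to $\Gamma$ via the inheritance machinery of \ABFG. This is exactly what Corollary~\ref{cor:raag} does, quoting \cite[Lemma~10.10]{ABFG} for $\Z$ and \cite[Corollary~10.13(2)]{ABFG} for the (normal, here direct-factor) subgroup, and the vanishing statements then follow from \cite[Theorem~10.20]{ABFG} because RAAGs are residually finite and of type $F_\infty$. Your second step is fine \emph{if} you invoke that inheritance result; your hands-on sketch of crossing F\o lner intervals with the fixed cells of $\mathrm{Sal}(L')$ understates the actual difficulty, since the finite covers attached to a Farber sequence of $\Gamma$ are in general not products of covers of the two factors, so the control of cell counts, norms and Farber neighbourhoods is precisely the content of \cite[Corollary~10.13(2)]{ABFG} rather than something that falls out "uniformly" from a product cell structure.

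The genuine gap is the classification step, which you rightly flag as the main obstacle but whose proposed proof does not work. The paper does not prove this classification; it cites Theorem~\ref{thm:raag-inam} \cite[Corollary~4.21]{Duchesne-Cat0}, stating that a RAAG is inner-amenable if and only if it splits off a $\Z$-factor (equivalently, the defining graph has a cone vertex). Your route via Theorem~\ref{thm:structure-inner-am} plus "every centraliser in a RAAG is a parabolic subgroup" fails on two counts. First, that statement about centralisers is false: in $F_2 = \langle a,b\rangle$ the centraliser of $ab$ is $\langle ab\rangle$, which is not parabolic; the correct description is Servatius' Centraliser Theorem, which is more complicated. Second, Theorem~\ref{thm:structure-inner-am} produces a chain $\Z \cong \langle g\rangle \le Z(L) \trianglelefteq L \le_q K \le_q \Gamma$ in which $L$ and $K$ are \emph{arbitrary} finitely generated non-amenable subgroups; nothing in this chain forces $g$, or any standard generator, to be central in $\Gamma$, so no cone vertex is extracted (note also that Theorem~\ref{thm:structure-inner-am} only applies in the \namen\ case; the amenable case $\Gamma \cong \Z^n$ is trivial but should be handled separately). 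The easy direction (cone vertex implies inner-amenable) is fine. If you simply cite the classification as the paper does, the remainder of your argument matches the paper's proof.
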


\subsection*{Organisation of this article}

In Section~\ref{sec:inner-amenable}, we collect many examples for inner-amenable groups.

Theorem~\ref{thm:main:in-am-coreb} is proved in Section~\ref{sec:app-coreb},
using a strategy by transport through $q$-normality outlined in Section~\ref{sec:tucker-drob}. 
For this, we 
need to construct actions out of $q$-normality (Section~\ref{sec:act-q-normal}). We explain
how to obtain such an action using a more general principle in Section~\ref{sec:more-actions}.

We will prove Proposition~\ref{prop:main:raag} in Section~\ref{sec:outlook}, where we also give 
an outlook on why these results might not generalise to higher degrees.

In Appendix~\ref{sec:careb}, we recall the definition of the \careb.

\begin{ack}
	The author is very grateful to his advisor, Clara Löh, for suggesting the generalisation of 
	properties to inner-amenable groups,  encouraging an axiomatisation of Tucker-Drob's
	arguments as well as helpful and encouraging discussions.
	He would like to express his gratitude to Kevin Li for simplifying some steps in the argument,
	suggesting the shortcut in Section~\ref{sec:act-q-normal} 
	as well as many discussions about cheap rebuilding of classifying spaces.
	The author is grateful to Clara Löh, Kevin Li and 
	Damien Gaboriau for comments on a preliminary version of 
	this article. 
	The author would also like to thank José Pedro Quintanilha for a discussion about actions on universal coverings.
	Thanks also to Zhicheng Han, who suggested Example~\ref{ex:han}, to Francesco Fournier-Facio 
	for pointing to Example~\ref{ex:fff2} and to the anonymous referee 
	for helpful suggestions.
\end{ack}

\section{Inner-amenable groups}
\label{sec:inner-amenable}

Inner-amenable groups were originally defined by Effros \cite{Effros}, who showed that property 
Gamma implies inner amenability. Effros' question whether the converse holds was answered 
negatively by Vaes \cite{Vaes-inneram-not-Gamma}.

We define inner-amenable groups by the existence of an \emph{atomless, conjugation-invariant} mean (see
Definition~\ref{def:inner-amen}). Note that this is the special case $H=G$ of Tucker-Drob's relative
definition of inner amenability \cite[page~5]{Tucker-Drob2020}.

\begin{remark}[]
	The restriction to \emph{atomless} means (instead of just means~$\mu$ 
	that satisfy~$\mu(\{e\}) = 0$, as originally 
	demanded by Effros) has the consequence that fewer groups are inner-amenable (e.g., finite groups are not inner-amenable
	in this sense), but also non-ICC groups \cite[Definition~1.2]{Stalder2006} are not automatically inner-amenable. This is the case, e.g.\ in Stalder's article \cite[below Definition~1.2]{Stalder2006}.
	For ICC groups (i.e., groups where every non-trivial conjugation class is infinite),
	the two notions coincide.
\end{remark}

Similar to amenability, there is a characterisation in terms of F\o lner-sequences.

\begin{lemma}[inner-F\o lner sequence
{{\cite[Théorème~1(F)]{Bedos-Harpe-inner-am}}}]
	\label{lemma:Folner}
	A countable  group $\Gamma$ is inner-amenable
	if and only if it admits an \emph{inner-F\o lner sequence}, i.e., a sequence
	$(F_n)_{n\in \N}$ of finite, nonempty subsets of $\Gamma$ with $\lim_{n\to\infty}
	|F_n| = \infty$
	such that for all $\gamma\in \Gamma$,
	\[
		\lim_{n\to\infty}\frac{|(\gamma\cdot F_n \cdot \gamma^{-1})\triangle F_n|}
		{|F_n|} = 0.
	\]
	
\end{lemma}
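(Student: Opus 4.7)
The plan is to follow the classical Namioka pattern that equates invariant means with Følner-type conditions, applied to the conjugation action of $\Gamma$ on itself, with the atomless hypothesis translating into the condition $|F_n|\to\infty$.

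For the implication from inner-Følner to inner-amenable, I would take an inner-Følner sequence $(F_n)$ and define $\mu_n(A) := |F_n\cap A|/|F_n|$, viewed as a state on $\ell^\infty(\Gamma)$. By the Banach--Alaoglu theorem, $(\mu_n)$ has a weak-$*$ cluster point $\mu$, which is a finitely additive probability measure on $\Gamma$. The estimate
\[
  |\mu_n(\gamma A\gamma^{-1}) - \mu_n(A)| \le \frac{|(\gamma F_n\gamma^{-1})\triangle F_n|}{|F_n|} \longrightarrow 0
\]
forces $\mu$ to be conjugation-invariant, while $\mu_n(\{x\}) \le 1/|F_n|\to 0$ shows that $\mu$ is atomless.

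For the converse, starting from an atomless, conjugation-invariant mean $\mu$, the goal is to produce, given a finite $S\subset\Gamma$ and $\epsilon>0$, a finite nonempty $F\subset\Gamma$ with $|\gamma F\gamma^{-1}\triangle F|/|F| < \epsilon$ for every $\gamma\in S$, and moreover with $|F|$ arbitrarily large. First, approximate $\mu$ in the weak-$*$ topology by finitely supported probability densities $\varphi_\alpha \in \ell^1(\Gamma)$ (which are weak-$*$ dense in the states). Conjugation-invariance of the limit means that $\gamma\cdot\varphi_\alpha\cdot\gamma^{-1} - \varphi_\alpha \to 0$ weakly in $\ell^1(\Gamma)$ for each $\gamma\in S$. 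Applying Mazur's theorem inside $\bigoplus_{\gamma\in S}\ell^1(\Gamma)$, suitable convex combinations give a single finitely supported probability density $\varphi$ with $\|\gamma\varphi\gamma^{-1}-\varphi\|_1 < \epsilon$ for all $\gamma\in S$. I would then apply Namioka's layer-cake trick: writing $\varphi = \int_0^\infty \mathbf{1}_{L_t}\,dt$ for $L_t = \{\varphi > t\}$,
\[
  \|\gamma\varphi\gamma^{-1} - \varphi\|_1 = \int_0^\infty |\gamma L_t\gamma^{-1}\triangle L_t|\,dt \qand \|\varphi\|_1 = \int_0^\infty |L_t|\,dt = 1,
\]
so an averaging argument over $t$ produces a level for which $F := L_t$ satisfies $|\gamma F\gamma^{-1}\triangle F|/|F| < |S|\epsilon$ for every $\gamma\in S$. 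Absorbing $|S|$ into $\epsilon$ completes one Følner step; exhausting $\Gamma$ by finite sets $S_n$ with $\epsilon_n\to 0$ assembles a sequence.

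The main obstacle is ensuring $|F_n|\to\infty$, which is the only place the atomless hypothesis actually matters. For any fixed finite $K\subset\Gamma$, atomlessness gives $\mu(K)=0$, so the weak-$*$ approximants can be arranged to have $\varphi(K)$ arbitrarily small, and in particular one can require $\|\varphi\|_\infty$ arbitrarily small. Combined with $\|\varphi\|_1 = 1$, this forces the support of $\varphi$ to be large, and a careful choice of the level $t$ in the layer-cake step then produces a level set $L_t$ with $|L_t|$ large (e.g., at least $n$) while simultaneously satisfying the Følner-type estimate. Verifying that these two requirements on $F_n$ can be met simultaneously, via a single bookkeeping of parameters, is the technical heart of the argument.
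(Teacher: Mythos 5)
The paper does not actually prove this lemma---it is quoted from B\'edos--de la Harpe (Th\'eor\`eme~1(F)), with only a remark translating atomlessness into the condition $|F_n|\to\infty$---so your proposal has to stand on its own; it follows the standard Day--Namioka route, and the direction ``inner-F\o lner $\Rightarrow$ inner-amenable'' is complete and correct. In the converse direction, however, the one place where atomlessness does real work---which you yourself call the technical heart---rests on a non sequitur: from $\mu(K)=0$ for a \emph{fixed} finite $K$ you can arrange $\varphi(K)$ small, but this does not yield $\|\varphi\|_\infty$ small, because the approximants may carry large atoms \emph{outside} $K$ (their supports move). This cannot be waved away: largeness of the F\o lner sets must genuinely come from atomlessness, since a non-ICC group with a finite non-trivial conjugacy class $C$ has the conjugation-invariant sets $F_n=C$ of bounded size without being inner-amenable in the atomless sense. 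Moreover, the final ``bookkeeping'' that extracts a large level set is announced but not carried out, and without the sup-norm control it would indeed fail (the averaging could select a level $t$ close to $\|\varphi\|_\infty$, where $L_t$ is small).

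Both missing steps can be supplied inside your framework. First, prove: for every $\epsilon>0$, an atomless mean $\mu$ lies in the weak-$*$ closure of the convex set $D_\epsilon$ of finitely supported probability densities with $\|\varphi\|_\infty\le\epsilon$. If not, Hahn--Banach separation (weak-$*$ continuous functionals on $(\ell^\infty)^*$ are given by elements of $\ell^\infty$) yields a real $f\in\ell^\infty(\Gamma)$ with $\mu(f)>\sup_{\varphi\in D_\epsilon}\varphi(f)$. Put $t_f:=\inf_{K\ \mathrm{finite}}\sup_{x\notin K}f(x)$. Since $\mu$ vanishes on finite sets, $\mu(f)\le\sup_{x\notin K}f(x)$ for every finite $K$, so $\mu(f)\le t_f$; on the other hand, for $N:=\lceil 1/\epsilon\rceil$ and any $\delta>0$ there are $N$ points with $f>t_f-\delta$ (there are infinitely many such points), and the uniform density on them lies in $D_\epsilon$, so $\sup_{\varphi\in D_\epsilon}\varphi(f)\ge t_f$---a contradiction. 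Since $\|\cdot\|_\infty\le\epsilon$ and finite support survive convex combinations, the Day/Mazur step retains the bound, giving $\varphi$ with $\|\varphi\|_\infty\le\epsilon$, $\|\varphi\|_1=1$ and $\sum_{\gamma\in S}\|\gamma\varphi\gamma^{-1}-\varphi\|_1<\delta$. Now fix the target size $M$ and take $\epsilon\le 1/(2M)$: the levels $t$ with $|L_t|<M$ lie in $(0,\|\varphi\|_\infty)$ and hence contribute at most $M\epsilon\le\tfrac12$ to $\int_0^\infty|L_t|\,dt=1$, so $\int_{\{t\,:\,|L_t|\ge M\}}|L_t|\,dt\ge\tfrac12$; averaging $\int_0^\infty\sum_{\gamma\in S}\bigl|\gamma L_t\gamma^{-1}\triangle L_t\bigr|\,dt<\delta$ over these good levels produces a $t$ with $|L_t|\ge M$ and $\sum_{\gamma\in S}\bigl|\gamma L_t\gamma^{-1}\triangle L_t\bigr|\le 2\delta\,|L_t|$. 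With these two insertions (and the exhaustion $S_n\nearrow\Gamma$, $\delta_n\to0$, $M_n\to\infty$ you already describe), your outline becomes a complete proof.
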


Note, that the condition $\mu(\{e\}) = 0$ 
originally translates to the condition~$e \not\in F_n$. In our context, atomlessness of the mean translates
to the property that $|F_n|\to \infty$.

\begin{example}\label{ex:inner-amenable}  
We  collect examples of inner-amenable groups given in the literature.
	
	\begin{enumerate}[(1)]
		\item \emph{Infinite} amenable groups are inner-amenable (because we can choose bi-invariant means). 
		\item Products $A \times \Gamma$, where $A$ is inner-amenable, and $\Gamma$ is any group, are inner-amenable \cite[Corollaire~2(iii)]{Bedos-Harpe-inner-am}.
		\item 
		\label{ex:inner-amenable-ext}		
		Extensions $1 \to \Gamma' \to \Gamma \to \Gamma'' \to 1$, where $\Gamma'$ is inner-amenable and $\Gamma''$ is amenable \cite[Corollaire~2(iv)]{Bedos-Harpe-inner-am}.
		\item Direct limits of inner-amenable groups are inner-amenable \cite[Corollaire~2(v)]{Bedos-Harpe-inner-am}.
		\item 
		\label{ex:inner-amenable-finindex}		
		Let $\Lambda \subseteq \Gamma$ be a finite-index subgroup. Then,
		$\Gamma$ is inner-amenable if and only if $\Lambda$ is \cite[Proposition~2.7]{Duchesne-Cat0}.
		\item All Baumslag-Solitar groups $\operatorname{BS}(m,n)$ (where $m,n\neq 0$) are inner-amenable \cite[Exemple~3.2]{Stalder2006}.
		\item If $H=\Lambda$ is abelian, then every HNN-extension $\operatorname{HNN}(\Lambda, H, K, \phi)$ is inner-amenable \cite[Exemple~3.3]{Stalder2006}.
		\item There is a criterion for the inner amenability of non-ascending HNN extensions 
		\cite[Theorem~1.2]{Duchesne-Cat0}.	Two specific instances of this phenomenon are given
		by Examples of Kida and Ozawa, where the associated subgroups are cyclic 
		\cite[Theorem~1.1 and~1.4]{Kida2014}.
		\item There is a criterion for inner amenability of wreath products \cite[Theorem~1.5]
		{Duchesne-Cat0}.
		\item Groups that are (JS-)stable, McDuff or have property Gamma, are inner-amenable
		 \cite[Figure~1]{Deprez-Vaes2018}. Sufficient conditions for stability can be 
		found in the article by Tucker-Drob \cite[Theorem~18, Corollary~19] {Tucker-Drob2020}.
		\item Thompson's group $F$ is inner-amenable \cite{Jolissaint-ThompsonF}. 
		In fact, it is even stable \cite[Corollary~21]{Tucker-Drob2020}.
		\item Thompson's groups $T$ and $V$ are \emph{not} inner-amenable \cite[Theorem~4.4]{Haagerup2017-ThompsonTV}.
		\item Nonabelian free groups are \emph{not} inner-amenable \cite[Corollaire~3(iii)]{Bedos-Harpe-inner-am}.		
		\item Discrete ICC groups having property (T) are \emph{not} inner-amenable \cite[Corollaire~3(i)]{Bedos-Harpe-inner-am}.
	\end{enumerate}
\end{example}

Another large class of examples was pointed out by Francesco Fournier-Facio.

\begin{example}[]
	\label{ex:fff}
	Let $\Gamma$ be a countable group with commuting conjugates, i.e., for every
	finitely generated subgroup $H\le \Gamma$, there is $f\in \Gamma$ such that
	$H$ commutes with $fHf^{-1}$. 
	Then, $\Gamma$ is inner-amenable. 
\end{example}

\begin{proof}
	We will show that $\Gamma$ admits an inner-F\o lner-sequence (Lemma~\ref{lemma:Folner}).
	Because $\Gamma$ is countable, let $\Gamma~=~\{\gamma_0, \gamma_1, \dots\}$. 
	Since $\Gamma$ has commuting 
	conjugates, for all $n\in \N$, there is $f_n\in \Gamma$ such that
	$\{\gamma_0, \dots, \gamma_n\}$ commutes with 
	$\{f_n\gamma_0f_n^{-1}, \dots, f_n\gamma_nf_n^{-1}\}$. In particular, this 
	implies that $F_n := \{f_n\gamma_0f_n^{-1}, \dots, f_n\gamma_nf_n^{-1}\}$
	defines an inner-F\o lner sequence.
\end{proof}

Together with Example~\ref{ex:inner-amenable}(\ref{ex:inner-amenable-ext}), we obtain
that the following groups are inner-amenable.

\begin{example}[]
	\label{ex:fff2}
	Let $\Gamma$ be a countable group with commuting conjugates, or be a group 
	extension of the form 
	\[
		1 \longrightarrow H \longrightarrow \Gamma \longrightarrow K \longrightarrow 1,
	\]
	where $H$ is countable with commuting conjugates and $K$ is amenable. 
	Then, $\Gamma$ is inner-amenable.
	
	In particular, this includes countable groups of piecewise linear or piecewise projective 
	homomorphisms of the real line \cite[Proof of Theorem~1.3]{FFLodha21} such as Thompson's 
	group~$F$. More examples of this 
	type can be found in the work of Fournier-Facio and Lodha \cite{FFLodha21}.
\end{example}

A natural question to ask is whether the result of Fournier-Facio and Lodha that second bounded cohomology
vanishes for group extensions as in Example~\ref{ex:fff2} \cite[Theorem~1.2]{FFLodha21} extends to all
 inner-amenable groups. Already
the group $\Z \times F_2$ shows that this is \emph{not} the case, as the second bounded cohomology~$H_b^2(\Z \times F_2)$ retracts onto~$H_b^2(F_2) \not\cong 0$.

\section{\texorpdfstring{Transport through $q$-normality}{Transport through q-normality}}
\label{sec:tucker-drob}

In a recent article, Tucker-Drob proved a structure theorem for inner-amenable groups,
 which suggests
a strategy for proving results about inner-amenable groups. The theorem guarantees
the existence of $q$-normal subgroups. Before stating the strategy (Theorem~\ref{thm:Tucker-Drob-method}), we recall the definition of $q$-normality, which was originally introduced
by Popa \cite[Definition~2.3]{Popa2006}.

\begin{defi}[$q$-normal, $q^*$-normal, {{\cite[p.~2]{Tucker-Drob2020}}}]
	\label{def:q-normal}
	A subgroup $H\subset \Gamma$ is \emph{$q$-normal} (resp.\ $q^*$-normal) if there is a generating 
	set $S$ of $\Gamma$, such that for all $s\in S$, the subgroup $sHs^{-1} \cap H$ is infinite (resp.\ 
	\namen).
	
	In this case, we write $H \le_q \Gamma$ (resp.\ $H \le_{q^*} \Gamma$).
\end{defi}

\begin{example}[]
	\label{ex:normal-impl-q-normal}
	Infinite, normal subgroups are $q$-normal. Indeed, if $H \trianglelefteq \Gamma$, then we have
	 $sHs^{-1} \cap H = H$
	for all $s\in \Gamma$, so we can take any generating set as witness. 
	
	The converse is not true. E.g.\ in the 
	\emph{Lamplighter group} $\big(\bigoplus_\Z \Z\big) \rtimes \Z$, the subgroup~$\bigoplus_\N \Z$,
	indexed only over the natural numbers, is $q$-normal but 
	not normal.
\end{example}

We can now state a variant of Tucker-Drob's structure theorem.

\begin{thm}[the structure of inner-amenable groups]
	\label{thm:structure-inner-am}
	Let $\Gamma$ be a finitely generated, virtually torsion-free, inner-amenable and \namen\  group. 
	Then, there exist $g\in \Gamma\backslash \{e\}$ and finitely generated \namen\  subgroups 
	$L\subseteq K \subseteq \Gamma$ such that 
	\[
		\Z \cong \langle g\rangle \le Z(L) \trianglelefteq L \le_q K\le_q \Gamma.
	\]
	Here, $Z(L)$ denotes the centre of $L$.
\end{thm}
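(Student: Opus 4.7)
The plan is to deduce this as a refinement of Tucker-Drob's original structure theorem for inner-amenable groups, adjusting two points: extracting finitely generated subgroups in the $q$-normal chain, and upgrading the central element to one of infinite order.

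First I would invoke Tucker-Drob's structure theorem, applied to the non-amenable inner-amenable group $\Gamma$. This provides countable non-amenable subgroups $L_0 \subseteq K_0 \subseteq \Gamma$ fitting into a chain $L_0 \le_q K_0 \le_q \Gamma$, together with a non-trivial element in the centre of $L_0$. Virtual torsion-freeness then yields an infinite-order central element: fix a torsion-free normal finite-index subgroup $\Gamma^\circ \trianglelefteq \Gamma$ (obtained by intersecting conjugates of any torsion-free finite-index subgroup), and replace the provided central element by a suitable power $g$ lying in $\Gamma^\circ$; this $g$ is still central in $L_0$ and now has infinite order, so $\langle g \rangle \cong \mathbb{Z}$ and $\langle g \rangle \le Z(L_0)$.

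For finite generation, I would exploit that $\Gamma$ itself is finitely generated. A finite generating set $S_\Gamma$ suffices to witness $q$-normality of $K_0$ in $\Gamma$; for each $s \in S_\Gamma$ choose an infinite-order element $x_s \in sK_0 s^{-1} \cap K_0$, which exists by virtual torsion-freeness because the intersection is infinite. Since amenability is preserved under directed unions, the countable non-amenable group $K_0$ contains a finitely generated non-amenable subgroup, from which I take a finite generating set $F$. The subgroup $K \le K_0$ generated by $F \cup \{g\} \cup \{x_s : s \in S_\Gamma\}$ is then finitely generated, non-amenable, and satisfies $K \le_q \Gamma$ with witness $S_\Gamma$. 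Repeating the procedure inside $K$ (using a finite generating set of $K$ to witness $q$-normality of $L_0 \cap K$, and retaining $g$ among the generators) yields the desired finitely generated non-amenable $L \subseteq K$ with $g \in Z(L)$ and $L \le_q K$.

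The main obstacle is the simultaneous preservation of finite generation, non-amenability, and $q$-normality when passing to the finitely generated refinements $L,K$; the unifying observation is that each of these conditions is finitely witnessed on countable groups, so the relevant finite witnesses can be bundled into a single generating set. A secondary subtlety is ensuring that the central element survives the truncation, which is handled by including $g$ among the chosen generators at each stage.
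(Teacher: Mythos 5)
Your reduction has two genuine gaps. First, the upgrade of the central element to infinite order does not work as stated: if the element $z \in Z(L_0)$ provided by the black box happens to be a torsion element, then the only power of $z$ lying in a torsion-free finite-index subgroup $\Gamma^\circ$ is the identity, so ``replacing $z$ by a suitable power in $\Gamma^\circ$'' produces nothing. The paper avoids this by producing $g$ of infinite order from the start: one may assume the atomless conjugation-invariant mean $\mu$ satisfies $\mu(\Gamma')=1$ for a torsion-free finite-index subgroup $\Gamma'$, and since $C_\Gamma(g)$ is non-amenable for $\mu$-almost every $g$ (Tucker-Drob's strengthening of Rosenblatt's theorem), one can pick such a $g$ in $\Gamma'\setminus\{e\}$; centrality is then automatic by taking $L$ to be a finitely generated non-amenable subgroup of $C_\Gamma(g)$ containing $g$. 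Note also that the packaged statement you invoke (a length-three chain together with a central element) is not literally what Tucker-Drob provides; assembling it is precisely the content of the proof.

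Second, and more seriously, the truncation to finitely generated $L$ and $K$ does not preserve $q$-normality as claimed. For $K \le_q \Gamma$: putting $x_s \in sK_0s^{-1}\cap K_0$ into the generating set of $K$ only gives $x_s \in K$; you also need $s^{-1}x_s s \in K$ to conclude $x_s \in sKs^{-1}\cap K$. This particular point is repairable by adding $s^{-1}x_s s$ (which lies in $K_0$) to the generators --- the paper's device of writing $g_i w_i g_i^{-1}=w_i'$ and retaining all letters of \emph{both} words serves exactly this purpose. The step $L\le_q K$, however, is not repairable by the same generic trick: ``repeating the procedure inside $K$'' presupposes that some finite generating set of $K$ witnesses $q$-normality of $L_0\cap K$ (or of a finitely generated subgroup of $L_0$) in $K$, but the witness you have is a generating set of $K_0$, and the generators of your $K$, chosen for other purposes, need not have infinite intersection with $L_0$ after conjugation; moreover $L_0\cap K$ need not be non-amenable, since your finitely generated non-amenable core $F$ was taken inside $K_0$ rather than $L_0$, and the resulting $L$ must also lie inside $K$. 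The paper builds the chain in the opposite order: first $L$ (finitely generated, non-amenable, inside $C_\Gamma(g)$, containing $g$), then $K \coloneqq \langle L\cup\widetilde S_L\rangle$ with $\widetilde S_L$ a finite subset of $S_L=\{s\in\Gamma\mid L\cap C_\Gamma(s)\text{ is non-amenable}\}$, so that the generating set $L\cup\widetilde S_L$ of $K$ automatically witnesses $L\le_q K$, because for $s\in\widetilde S_L$ one has $L\cap sLs^{-1}\supseteq L\cap C_\Gamma(s)$, which is infinite. This use of Tucker-Drob's Theorem~4.3(i) and of the specific set $S_L$ is exactly what is lost when the structure theorem is treated as a black box, and without it the step $L\le_q K$ in your argument is unsupported.
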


\begin{proof}
	We show how to deduce this result from the work of Tucker-Drob \cite{Tucker-Drob2020} 
	(where we always choose $H=\Gamma$ and $\mathcal F = \{\Gamma\}$). 
	The main effort goes into
	proving that a chain of length~$3$ suffices and that $L$ and $K$ can be assumed to 
	be finitely generated. Moreover, we have to show that we can choose $g$ to be
	of infinite order.
	
	Let $\mu$ be an atomless, conjugation-invariant mean on $\Gamma$. Let 
	$\Gamma' \subseteq \Gamma$ be a finite index, torsion-free subgroup. We can
	assume $\mu(\Gamma') = 1$ \cite[Proposition~2.3]{Duchesne-Cat0}.  By a 
	strengthening of a classical theorem, called Rosenblatt's Theorem \cite[Proposition~4.2]{Haagerup2017-ThompsonTV}, the following holds:
	The centraliser $C_\Gamma(g)$ is \namen\ 
	for $\mu$-almost all~$g\in \Gamma$ \cite[Lemma~4.2]{Tucker-Drob2020}. In particular, we 		find such an element $g\in \Gamma'\backslash \{e\}$. Because $\Gamma'$ is torsion-free,
	 we have $\langle g \rangle \cong \Z$.
	Since $C_\Gamma(g)$ is \namen, there is a finitely generated subgroup $L\subset C_\Gamma(g)$ 
	that is \namen. We can suppose that $g\in L$. In particular, $\langle g \rangle$ is central in $L$.
	
	Since $L$ is a \namen\ subgroup of an inner-amenable group, we have that \cite[Theorem~4.3(i)]{Tucker-Drob2020}
	\[
	   L\le_{q^*} \langle L \cup S_L\rangle \le_q \Gamma, \quad
	   \text{where}\quad 
	   S_L \coloneqq \{g\in \Gamma \mid L\cap C_\Gamma (g) \text{ is \namen} \}.
	\]
	Note that in particular, $q^*$-normality implies $q$-normality. We will show how to pick a suitable finitely 
	generated subgroup $K\subset \langle L \cup S_L \rangle$: Since $\langle L \cup S_L\rangle \le_q \Gamma$ 
	and $\Gamma$ is finitely generated, we can pick a finite generating set $g_1, \dots, g_n$ of $\Gamma$
	such that for all $i\in \{1,\dots,n\}$, we have that 
	\[
		g_i\cdot \langle L \cup S_L \rangle \cdot g_i^{-1} \cap \langle L \cup S_L \rangle
	\] 
	is infinite. In particular, its intersection with the finite-index subgroup
	$\Gamma'$ is still infinite and we can pick a non-trivial element in this 
	intersection and write
	\begin{equation}
		\label{eq:conj-in-K-L}
		g_i\cdot w_i \cdot g_i^{-1} = w_i'
	\end{equation}
	where $w_i$ and $w_i'$ are words in $L\cup S_L \cup S_L^{-1}$. 
	Let $\widetilde S_L$ be the (finite) set of letters in~$S_L$ that occur in any of the 
	words $w_1, w_1', \dots, w_n, w_n'$. We set $K \coloneqq \langle L \cup \widetilde S_L\rangle$. 
	As $L$ is 
	finitely generated and $\widetilde S_L$ is finite, $K$ is finitely generated. The relation in
	 Equation~\eqref{eq:conj-in-K-L} witnesses that for all $i\in \{1,\dots,n\}$, the intersection 
	$g_i\cdot K \cdot g_i^{-1} \cap K$ is non-trivial. Because the words $w_i'$ were
	chosen in $\Gamma'$, which is torsion-free, this intersection 
	is infinite. This shows that $K \le_q \Gamma$. Moreover, the generating set $L \cup 
	\widetilde S_L$ 	witnesses that $L \le_q K$. 
\end{proof}

We illustrate this result with an example of a non-trivial chain of $q$-normal subgroups.

\begin{example}[]
	Consider the inner-amenable and \namen\ group \[
		\Gamma \coloneqq F_2 \times F_2 \times \Z 
			= \langle a,b,c,d,e \mid [a,c],[a,d],[b,c],[b,d], [a,e],[b,e],[c,e], [d,e]\rangle.
	\]
	Let $g \coloneqq a$ and $L \coloneqq \langle a,c,d\rangle, 
	K \coloneqq \langle a,b^2,c,d\rangle$.
	Then,
	\[
		\Z \cong \langle g\rangle \le Z(L) \trianglelefteq L \le_q K\le_q \Gamma.
	\]
	Note that $L$ is \emph{not} normal in $K$ and $K$ is \emph{not} normal in $\Gamma$.
\end{example}

Theorem~\ref{thm:structure-inner-am} suggests the following strategy for proving results about virtually torsion-free inner-amenable groups,
which Tucker-Drob used to prove that inner-amenable groups are cheap and of fixed price \cite[Theorem~5]{Tucker-Drob2020}.

\begin{thm}[]
	\label{thm:Tucker-Drob-method}	
	Let $\mathcal C$ be a class of finitely generated, torsion-free groups that 
	is closed under taking finitely generated subgroups and contains the integers~$\Z$.
	Let $P$ be an (isomorphism) invariant defined for groups in $\mathcal C$. 
	Suppose that the following two conditions hold:
	\begin{enumerate}[(1)]
		\item The integers $\Z$ satisfy $P$.
		\label{asspt:TD-1}
		\item Let $L, G \in \mathcal C$ such that $L\le_q G$ (see Definition~\ref{def:q-normal}).
		Then, if $L$ satisfies $P$, then so does $G$.
		\label{asspt:TD-2}
	\end{enumerate}
	
	Then, $P$ holds for all groups in $\mathcal C$ that are inner-amenable but \namen\ .
\end{thm}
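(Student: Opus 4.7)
The plan is to deduce this as a direct consequence of the structure theorem for inner-amenable groups (Theorem~\ref{thm:structure-inner-am}) by propagating the property $P$ along a chain of $q$-normal inclusions, invoking hypothesis~(\ref{asspt:TD-2}) at each step. The base case will use hypothesis~(\ref{asspt:TD-1}) for $\Z$, and the remaining work is checking that every subgroup appearing in the chain lies in $\mathcal C$.

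Concretely, let $\Gamma \in \mathcal C$ be inner-amenable and non-amenable. Since $\mathcal C$ consists of torsion-free groups, $\Gamma$ is in particular virtually torsion-free, so Theorem~\ref{thm:structure-inner-am} supplies an element $g \in \Gamma \setminus \{e\}$ and finitely generated non-amenable subgroups $L \subseteq K \subseteq \Gamma$ with
\[
    \Z \cong \langle g\rangle \le Z(L) \trianglelefteq L \le_q K \le_q \Gamma.
\]
Because $\mathcal C$ is closed under finitely generated subgroups, the groups $\langle g\rangle$, $L$, and $K$ all lie in $\mathcal C$. Next, I would observe that $\langle g\rangle$ is central (hence normal) in $L$ and infinite, so by Example~\ref{ex:normal-impl-q-normal} we have $\langle g\rangle \le_q L$. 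This gives us the enriched chain
\[
    \langle g\rangle \le_q L \le_q K \le_q \Gamma,
\]
all inside $\mathcal C$.

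Now the conclusion follows by a short induction along this chain. By hypothesis~(\ref{asspt:TD-1}), $\langle g\rangle \cong \Z$ satisfies $P$. Applying hypothesis~(\ref{asspt:TD-2}) to $\langle g\rangle \le_q L$ yields that $L$ satisfies $P$; applying it to $L \le_q K$ gives the property for $K$; and one further application to $K \le_q \Gamma$ gives the property for $\Gamma$.

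I do not expect a real obstacle here: the entire content of the theorem is packaged inside Theorem~\ref{thm:structure-inner-am}, and the only small verification is the $q$-normality of the central subgroup $\langle g\rangle$ in $L$, which is immediate from Example~\ref{ex:normal-impl-q-normal} once we note that $\langle g\rangle \cong \Z$ is infinite. The axiomatic framing is arranged precisely so that the propagation along the chain is mechanical.
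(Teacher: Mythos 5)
Your proposal is correct and follows essentially the same route as the paper's own proof: invoke Theorem~\ref{thm:structure-inner-am}, upgrade the central subgroup $\langle g\rangle$ to a $q$-normal one via Example~\ref{ex:normal-impl-q-normal}, check $L,K\in\mathcal C$ using closure under finitely generated subgroups, and apply hypothesis~(\ref{asspt:TD-2}) three times starting from hypothesis~(\ref{asspt:TD-1}) for $\Z$. No gaps.
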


\begin{proof}
	Let $\Gamma$ be a virtually torsion-free, finitely generated group in $\mathcal C$ 
	that is inner-amenable 	and \namen.
	By Theorem~\ref{thm:structure-inner-am}, there exist $g\in \Gamma\backslash \{e\}$ and finitely generated subgroups~$L\subseteq K \subseteq \Gamma$ such that
	\[
		\Z \cong \langle g \rangle \le Z(L) \trianglelefteq L \le_q K\le_q \Gamma.
	\]	
	Note that $\langle g \rangle \cong \Z$ satisfies $P$ by assumption~\eqref{asspt:TD-1}. Moreover, $\langle g \rangle$ is central in $L$, thus normal in $L$. In particular, it is also $q$-normal in $L$ (Example~\ref{ex:normal-impl-q-normal}). Thus, we have that  $\langle g \rangle \le_q L \le_q K \le_q \Gamma$. As $L$ and $K$ are finitely 
	generated, $L, K \in \mathcal C$. Hence, we obtain that $\Gamma$ satisfies~$P$ by applying 
	assumption~\eqref{asspt:TD-2} three times. 
\end{proof}

\section{\texorpdfstring{An action induced by $q$-normality}{An action induced by q-normality}}
\label{sec:act-q-normal}
In this section, we will explain how to construct an action out of $q$-normality. If $L \le_q G$ 
is a $q$-normal subgroup, we want to construct an action of~$G$ on a graph whose vertex stabilisers
are (isomorphic to) $L$.

Recall that $G/L \coloneqq \{gL \mid g\in G\}$ is a transitive $G$-set, and an action $G \actson G/L$ 
is given by left-translation of the cosets.

\begin{lemma}[blow up-action out of $q$-normality]
	\label{lemma:act-q-normal-direct}
	Let $G$ be a finitely generated group and 
	let $L\leq_q G$ be a $q$-normal subgroup. There is a
	cocompact action $G \actson (G/L, E)$ 
	on a connected graph whose edge stabilisers are all infinite.
\end{lemma}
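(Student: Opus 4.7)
The plan is to take a carefully chosen generating set from Definition~\ref{def:q-normal} and use its elements as ``edges out of $L$''.

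First, I observe that the generating set $S_0$ witnessing $q$-normality of $L$ can be replaced by a finite symmetric generating set $S$ with the same infinite-intersection property: finiteness follows because $G$ is finitely generated, so some finite subset of $S_0$ still generates $G$ and every element of it retains the infinite-intersection property; symmetry is automatic, since $L \cap s^{-1} L s = s^{-1}(L \cap sLs^{-1}) s$ is conjugate to $L \cap sLs^{-1}$. Fix such an $S$ with $L \cap sLs^{-1}$ infinite for all $s \in S$.

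With $S$ fixed, I put $V \coloneqq G/L$ with its natural left-translation $G$-action and define
\[
	E \coloneqq \bigl\{ \{gL,\ gsL\} : g\in G,\ s \in S \bigr\}.
\]
The action on $V$ preserves $E$, since $h \cdot \{gL, gsL\} = \{hgL, hgsL\} \in E$, so $G$ acts on the graph $(V, E)$. The three required conclusions are then straightforward. \emph{Cocompactness:} the $|S|$ edges $\{L, sL\}$ for $s \in S$ form a finite set of $G$-orbit representatives of edges. \emph{Connectedness:} since $S$ generates $G$, any $gL$ can be reached from $L$ by writing $g = s_1\cdots s_n$ with $s_i\in S$; the sequence $L, s_1 L, s_1 s_2 L, \ldots, gL$ is a path in which each step is a $G$-translate of a generating edge $\{L, s_k L\}$. \emph{Infinite edge stabilisers:} the stabiliser of the unordered edge $\{L, sL\}$ contains the pointwise stabiliser $L \cap sLs^{-1}$, which is infinite by the choice of $S$, and since edges in a common orbit have conjugate stabilisers, all edge stabilisers are infinite.

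There is no deep obstacle in this construction; the only delicacy is in the initial setup, namely verifying that $S$ may be chosen finite and symmetric without destroying the $q$-normality witness, so that the resulting action is genuinely on an unoriented connected graph with finitely many edge orbits and with every edge stabiliser infinite.
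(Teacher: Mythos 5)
Your construction is exactly the one in the paper: take a finite witnessing generating set $S$, form the graph on $G/L$ with edges $\{gL, gsL\}$, and verify cocompactness, connectedness, and that the stabiliser of $\{gL,gsL\}$ contains $g(L\cap sLs^{-1})g^{-1}$. The proposal is correct; your extra symmetrisation of $S$ is a harmless cosmetic addition (the paper's connectedness argument works without it, since traversing an edge backwards handles inverse generators).
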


\begin{proof}
	We choose a generating set $S$ of $G$ as in Definition~\ref{def:q-normal}.
	Because $G$ is finitely generated, we can assume $S$ to be finite. We define 
	\[
		E \coloneqq \bigl\{\{gL, gsL\} \mid 
			g\in G, s\in S\bigr\}
	\]
	Then, $E$ is closed under $G$-orbits (where the action is by left-translation) 
	and by construction, the action is cocompact (the quotient has one $0$-cell and $|S|$ many
	$1$-cells). Moreover, because $S$ is a generating set of $G$, the graph $(G/L, E)$
	is connected. Finally, fix an edge $\{gL, gsL\}$. Its stabiliser contains
	\[gLg^{-1} \cap (gs)L(gs)^{-1} = g\cdot (L \cap sLs^{-1}) \cdot g^{-1},\]
	which is infinite by choice of $S$. 
\end{proof}

\section{\texorpdfstring{A different approach to actions induced by $q$-normality} {A different approach to actions induced by q-normality}}
\label{sec:more-actions}

In this section, we present an alternative proof of Lemma~\ref{lemma:act-q-normal-direct}.
We show how to get an appropriate action out of a `blow up' constructing, combining
two actions. This exhibits an interesting technique, and shows a more conceptual 
way how to obtain the desired action.
We postpone all technical proofs to Appendix~\ref{appx:proofs}.

If we have a $q$-normal subgroup, we can define the following action on a graph.

\begin{lemma}[action out of $q$-normality]
	\label{lemma:act-q-normal-conj}
	Let $L\leq_q G$ be a $q$-normal subgroup (see Definition~\ref{def:q-normal}).
	Then, we define a graph $\Omega \coloneqq (V,E)$ where
	\begin{align*}
		V &= \{gLg^{-1} \mid g\in G\}\\
		E &= \big\{
			\{g_1Lg_1^{-1}, g_2Lg_2^{-1}\} \mid g_1, g_2 \in G,  g_1Lg_1^{-1}\cap g_2Lg_2^{-1} \text{ is infinite},  g_1Lg_1^{-1}\neq g_2Lg_2^{-1}
			\big\}
	\end{align*}
	Then, $G$ acts on $\Omega$ by conjugation. The vertex set contains exactly one orbit.
	The stabiliser of these vertices is given by the normaliser $N_G(L)$ of $L$ in $G$. 
	
	Moreover, the graph $\Omega$ is connected and for each  $\{g_1Lg_1^{-1}, g_2Lg_2^{-1}\} \in E$,
	we have by construction that $g_1Lg_1^{-1}\cap g_2Lg_2^{-1}$ is infinite.
\end{lemma}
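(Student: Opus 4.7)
The plan is to verify the claims in the order listed, with connectedness as the only substantive point; the rest amounts to bookkeeping about how conjugation permutes conjugates of $L$. First I would check that conjugation gives a well-defined $G$-action on $V$: for any $h, g \in G$ one has $h(gLg^{-1})h^{-1} = (hg)L(hg)^{-1} \in V$, and the same formula shows the action is transitive, since every element of $V$ is the image of the base vertex $L$ under some conjugation. The stabiliser of $L$ is then $\{h \in G \mid hLh^{-1} = L\} = N_G(L)$ by definition of the normaliser, and the other vertex stabilisers are its conjugates by orbit-stabiliser. For an edge $\{g_1 L g_1^{-1}, g_2 L g_2^{-1}\} \in E$, conjugation by $h$ produces the pair $\{(hg_1)L(hg_1)^{-1}, (hg_2)L(hg_2)^{-1}\}$ whose intersection is $h\bigl(g_1Lg_1^{-1} \cap g_2Lg_2^{-1}\bigr)h^{-1}$; this has the same cardinality as the original, so it remains infinite and the two vertices remain distinct, giving an edge again. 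Hence $G \actson \Omega$ as claimed.

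The substantive step is connectedness. Let $S$ be a generating set of $G$ that witnesses $q$-normality, so $L \cap sLs^{-1}$ is infinite for every $s \in S$. I would show that every vertex $gLg^{-1}$ can be joined to the base vertex $L$ by a walk. Writing $g = s_1 s_2 \cdots s_n$ with each $s_i \in S \cup S^{-1}$, consider the sequence of vertices
\[
L,\ s_1 L s_1^{-1},\ (s_1 s_2) L (s_1 s_2)^{-1},\ \ldots,\ g L g^{-1}.
\]
Two consecutive entries $s_1\cdots s_{i-1} L (s_1\cdots s_{i-1})^{-1}$ and $s_1\cdots s_{i} L (s_1\cdots s_{i})^{-1}$ have intersection equal to $s_1 \cdots s_{i-1}\bigl(L \cap s_i L s_i^{-1}\bigr)(s_1\cdots s_{i-1})^{-1}$, so the question reduces to showing that $L \cap sLs^{-1}$ is infinite for every $s \in S \cup S^{-1}$. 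For $s \in S$ this is the hypothesis; for $s = t^{-1}$ with $t \in S$, one has $L \cap t^{-1} L t = t^{-1}\bigl(tLt^{-1} \cap L\bigr)t$, which is conjugate to the infinite set $tLt^{-1} \cap L$, hence infinite. Thus each consecutive pair either coincides (in which case one copy can be deleted from the sequence) or forms an edge of $\Omega$, yielding the required walk from $L$ to $gLg^{-1}$.

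The final clause about edge intersections is just a restatement of the definition of $E$, so nothing further is needed. The main obstacle is connectedness, but as outlined the generating-set argument reduces it to the single condition of $q$-normality applied generator-by-generator, with closure under inverses handled by a one-line conjugation; I do not foresee any additional technical issues.
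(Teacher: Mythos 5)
Your proposal is correct and follows essentially the same route as the paper: the action, transitivity and stabiliser claims are routine, and connectedness is proved by writing $g$ as a word in $S \cup S^{-1}$ and noting that consecutive conjugates intersect in a conjugate of $L \cap sLs^{-1}$. You even handle two points the paper leaves implicit (inverse generators and possibly coinciding consecutive vertices), so nothing is missing.
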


The vertex set consists of exactly one $G$-orbit, and the stabilisers of the vertices
are conjugates of the normaliser $N_G(L)$ of $L$. Thus, as a $G$-set, $V$ is isomorphic 
to $G/N_G(L)$ (where $G$ acts by left-translation). Thus, we will view $\Omega$ as a graph with 
$G$-action and vertex set $G/N_G(L)$ in the following.

On the other hand, the normaliser $N_G(L)$ acts on the classifying space of the quotient $N_G(L)/L$.

\begin{lemma}[action on the quotient]
	\label{lemma:act-quotient}
	Let $L \trianglelefteq H$ be an infinite, normal subgroup. Then, $H$ acts on $E(H/L)^{(1)}$, 
	i.e.\ the 1-skeleton of a classifying space of the quotient $H/L$.
	Explicit constructions show that we can assume the 0-skeleton to be given by $H/L$.
	Moreover, $E(H/L)^{(1)}$ is connected and stabilisers are given by $L$, which is infinite by assumption.
\end{lemma}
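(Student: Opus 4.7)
The plan is to build an explicit free $H/L$-CW model of $E(H/L)$ whose $0$-skeleton equals $H/L$ and whose $1$-skeleton is a Cayley graph, and then lift the resulting free $H/L$-action to an $H$-action by composing with the quotient homomorphism $\pi\colon H\twoheadrightarrow H/L$. The infiniteness of vertex stabilizers will then fall out of the exact sequence $1 \to L \to H \to H/L \to 1$.

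First, I would choose a symmetric generating set $T$ of $H/L$ (for instance, one may take $T = H/L \setminus \{e_{H/L}\}$). Let $X$ be the Cayley graph $\operatorname{Cay}(H/L, T)$: its vertex set is $H/L$ and its $1$-cells are the pairs $\{xL, xtL\}$ for $x\in H$, $t\in T$. Then $X$ is connected because $T$ generates $H/L$, and $H/L$ acts freely and cellularly on $X$ by left-translation on vertices. Next, I would extend $X$ to a contractible free $H/L$-CW complex $Y$ by equivariantly attaching higher-dimensional cells in the standard way (iteratively killing $\pi_n$ by gluing $H/L$-orbits of $n$-cells along generators; cf.\ the textbook construction of a $K(G,1)$). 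By construction, $Y^{(0)} = H/L$, $Y^{(1)} = X$, and $Y$ is a model for $E(H/L)$.

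Now I would define the desired $H$-action on $Y^{(1)}$ by pulling back the $H/L$-action along $\pi$, i.e., $h \cdot y \coloneqq \pi(h) \cdot y$ for $h \in H$ and $y \in Y^{(1)}$. Since the $H/L$-action on the vertex set $H/L$ is free, the stabilizer of a vertex $xL$ under the $H$-action is
\[
    \operatorname{Stab}_H(xL) \;=\; \pi^{-1}\bigl(\operatorname{Stab}_{H/L}(xL)\bigr) \;=\; \pi^{-1}(\{e_{H/L}\}) \;=\; L,
\]
which is infinite by hypothesis. Connectedness of $Y^{(1)} = X$ was already verified in the Cayley graph step.

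No serious obstacle arises: the construction of a classifying space with prescribed $0$-skeleton via a Cayley graph is textbook, and the stabilizer computation is immediate from freeness together with the short exact sequence above. The one point worth confirming carefully in a full write-up is that the equivariant cell attachments for $n \ge 2$ can be performed while keeping $X$ intact as the $1$-skeleton, which is standard for free $G$-CW complexes.
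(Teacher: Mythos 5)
Your overall route---build an explicit free $(H/L)$-CW model of $E(H/L)$ with $0$-skeleton $H/L$ and pull the action back along $\pi\colon H \to H/L$---is exactly the kind of ``explicit construction'' the lemma alludes to, and the stabiliser computation via $\pi^{-1}$ is correct as far as it goes. However, one step as written fails: the claim that $H/L$ acts \emph{freely} and cellularly on $X = \operatorname{Cay}(H/L, T)$. If $t \in T$ is an involution, then the element $x t x^{-1}$ interchanges the two endpoints of the edge $\{xL, xtL\}$, so it stabilises (and inverts) that $1$-cell; the action is then not free, and it is not a $G$-CW action in the usual sense (cell stabilisers must act trivially on their cells), so you cannot extend $X$ to a contractible \emph{free} $(H/L)$-CW complex with $X$ as its $1$-skeleton. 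Moreover, for such an edge the $H$-stabiliser is an extension of $L$ by $\Z/2$, not $L$ itself. Involutions cannot be avoided in general: with your choice $T = H/L \setminus \{e\}$ they occur whenever $H/L$ has $2$-torsion, and for quotients such as the infinite dihedral group or an infinite elementary abelian $2$-group \emph{every} generating set contains involutions. (For the downstream use in the blow-up lemma this is harmless, since those edge stabilisers still contain $L$ and are therefore infinite, but it does not prove the statement as phrased, which asserts an action on $E(H/L)^{(1)}$ with stabilisers equal to $L$.)

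The fix is standard and cheap: instead of the Cayley graph with unordered edges, take the $1$-skeleton of the simplicial bar construction $|E_\bullet(H/L)|$, i.e.\ one $1$-cell for each \emph{ordered} pair of distinct cosets (equivalently, two directed edges per unordered pair, or one $1$-cell $(x L, x t L)$ for each $x$ and each generator $t$, without identifying $(xL,xtL)$ with $(xtL,xL)$). The $0$-skeleton is still $H/L$, the graph is connected, and no group element can invert a $1$-cell, so the $H/L$-action is free on cells; the equivariant attachment of higher cells then goes through as you describe, and all cell stabilisers of the pulled-back $H$-action are exactly $L$. Note that barycentric subdivision would also restore freeness, but it would introduce new vertices and so destroy the requirement that the $0$-skeleton be $H/L$; the bar-construction (or directed-edge) model is the right repair.
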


We can use these two actions to obtain the statement
 in Lemma~\ref{lemma:act-q-normal-direct} using the following method.

\begin{lemma}[blowing up actions]
	\label{lemma:blow-up}
	
	Let $G$ be a group and $L \subseteq H \subseteq G$ be subgroups.
	Let $H \actson (H/L, E_{H/L})$ and 
	$G \actson (G/H, E_{G/H})$ be actions on connected graphs, where the 
	action on the vertex sets is given by left-translation.
	We suppose the following:
	\begin{itemize}
		\item In $(H/L, E_{H/L})$, all edge stabilisers are infinite. 
		\item In $E_{G/H}$, for each $G$-orbit, we pick a representative $f \in E_{G/H}$ incident 
		to $eH$. We denote by $F$ the set containing the representatives we pick.
		For each $f\in F$, we choose $g(f) \in G$ such that $f = \{eH, g(f)H\}$ and we demand that 
		$L \cap g(f) L g(f)^{-1}$ is infinite.
	\end{itemize}
	
	Then, there is an action $G\actson \Omega \coloneqq (G/L, E)$, given by left-translation 
	on the vertex set $G/L$, where $\Omega$ is a 
	connected graph and each stabiliser of an edge is infinite.
	
	Concretely, we note that $G/L$ is isomorphic (as a $G$-set) to $G \times_H (H/L)$ via the 
	isomorphism
	\[
	\begin{array}{rcl}
		G/L & \longrightarrow & G \times_H H/L\\
		gL & \longmapsto & [(g, eL)] 
	\end{array}
	\]
	and we can thus define $E$ as follows.
	
	\begin{align*}
		E \coloneqq 
			&\big\{
			\{[(g,h_1L)], [(g, h_2L)]\} \mid g\in G, \{h_1L,h_2L\} \in E_{H/L} 
		\big\}\\
		\cup &
		\big\{
			\{[(g, eL)], [(g\cdot g(f), eL)]\} \mid g\in G, f\in F
		\big\}
	\end{align*}
\end{lemma}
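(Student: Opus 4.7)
The plan is to exploit the bundle structure $G/L \to G/H$ with fibre $H/L$. First I would verify that the map $gL \mapsto [(g,eL)]$ is a well-defined $G$-equivariant bijection $G/L \to G \times_H (H/L)$: well-definedness uses $L \subseteq H$, and the inverse sends $[(g,hL)]$ to $ghL$. Under this identification, the projection $gL \mapsto gH$ realises $G/L$ as copies of $H/L$ fibred over $G/H$. The first family of edges in $E$ corresponds to \emph{fibre edges} inside each copy of $H/L$, and the second family to \emph{cross-fibre edges} joining adjacent fibres (over endpoints of edges in $E_{G/H}$). Both families are $G$-invariant by construction, and the first family is well-defined on equivalence classes in $G \times_H (H/L)$ because $E_{H/L}$ is $H$-invariant; hence $G$ acts on $\Omega$.

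For connectedness, I would show every vertex $gL$ is joined to $eL$. Since $(G/H, E_{G/H})$ is connected, pick a path $eH = a_0 H, a_1 H, \ldots, a_n H = gH$. Each edge $\{a_{i-1}H, a_i H\}$ lies in the $G$-orbit of some $f_i \in F$, so there is $g_i \in G$ with $\{a_{i-1}H, a_iH\} = \{g_i H, g_i g(f_i) H\}$; this yields a cross-fibre edge $\{g_i L, g_i g(f_i) L\} \in E$ joining a vertex in the fibre over $a_{i-1}H$ with a vertex in the fibre over $a_i H$. The fibre over any coset $aH$ is, via left multiplication by $a$, graph-isomorphic to $(H/L, E_{H/L})$ and hence connected, so chaining fibre-internal paths with these cross-fibre edges produces a path from $eL$ in the fibre over $eH$ to $gL$ in the fibre over $gH$.

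For edge stabilisers, a first-family edge $\{g h_1 L, g h_2 L\}$ has stabiliser of the ordered pair equal to $g(h_1 L h_1^{-1} \cap h_2 L h_2^{-1}) g^{-1}$, which is a $G$-conjugate of the $H$-stabiliser of $\{h_1L, h_2L\}$ in $(H/L, E_{H/L})$ and therefore infinite by hypothesis. A second-family edge $\{gL, g \cdot g(f) \cdot L\}$ has stabiliser of the ordered pair equal to $g(L \cap g(f) L g(f)^{-1}) g^{-1}$, infinite by the assumption on $g(f)$. The main obstacle is the connectedness argument: setting up the fibre bookkeeping cleanly is the only nontrivial step, and once the fibration picture is in place, the remaining verifications are direct computations.
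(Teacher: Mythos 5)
Your proposal is correct and follows essentially the same route as the paper: identify $G/L$ with the induction $G\times_H (H/L)$, prove connectedness by combining a path in the connected graph $(G/H,E_{G/H})$ (realised by the second family of edges) with connectedness of the fibres (copies of $(H/L,E_{H/L})$), and exhibit infinite edge stabilisers by the explicit conjugation computations. The only nitpick is that $g\bigl(h_1Lh_1^{-1}\cap h_2Lh_2^{-1}\bigr)g^{-1}$ is the conjugate of the \emph{pointwise} stabiliser of the pair $(h_1L,h_2L)$, which may be a proper (index at most $2$) subgroup of the setwise $H$-stabiliser of the edge; since it is still infinite whenever the edge stabiliser is, your conclusion stands.
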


To explain notation, we would like to recall the following: 
If $H\actson X$ is an action of~$H$,
	 we can induce an action of~$G$ as follows: We consider the quotient
	\[
		G \times_H X \coloneqq (G\times X) / \big(\forall_{g\in G, x\in X, h\in H} \; (gh, x) \sim (g, hx)\big),
	\]
	which inherits a $G$-action by left-translation on the first component. We call this construction 
	the \emph{induction} of $H\actson X$ by $G$.

We'd like to point out that the resulting action might not be cocompact. 
Because $G$ is finitely generated, we can fix this issue easily.
The idea was pointed out by Gaboriau.

\begin{lemma}[cocompactness]
	\label{lemma:cocompact}
	Let $G$ be finitely generated, $L\subseteq G$ be a subgroup 
	and $G\actson (G/L, E)$ be an action on a connected
	graph induced by left-translation on $G/L$. 
	Then, there is a $G$-invariant subset $E'\subset E$ such that the
	action $G\actson (G/L, E')$ is cocompact and $(G/L, E')$ is connected.
\end{lemma}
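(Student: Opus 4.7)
The plan is to exploit finite generation of $G$ to extract finitely many edges whose $G$-orbit already forms a connected spanning subgraph of $(G/L, E)$. Concretely, I would fix a finite generating set $S = \{s_1, \dots, s_n\}$ of $G$. By connectivity of $(G/L, E)$, for each $i \in \{1, \dots, n\}$ I choose a finite edge path $p_i$ in $E$ from the base vertex $eL$ to $s_i L$. Let $E_0 \subset E$ be the (finite) union of all edges appearing in the paths $p_1, \dots, p_n$, and set
\[
E' := G \cdot E_0 \subset E.
\]
By construction, $E'$ is $G$-invariant and has at most $|E_0|$ edge orbits; since $G/L$ is a single $G$-orbit on vertices, the quotient graph $G\backslash(G/L, E')$ has one vertex and finitely many edges, which is precisely cocompactness.

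The second task is to verify connectivity of $(G/L, E')$. Given a vertex $gL$, write $g = s_{i_1}^{\varepsilon_1} \cdots s_{i_k}^{\varepsilon_k}$ with $\varepsilon_j \in \{\pm 1\}$. For each $j$, the left-translate of $p_{i_j}$ (or its reverse, when $\varepsilon_j = -1$) by the prefix $s_{i_1}^{\varepsilon_1} \cdots s_{i_{j-1}}^{\varepsilon_{j-1}}$ is again a path in $E'$, because $E' = G \cdot E_0$ is $G$-invariant. Concatenating these translated paths for $j = 1, \dots, k$ produces a path from $eL$ to $gL$ in $(G/L, E')$, so every vertex is reachable from $eL$.

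I do not foresee a genuine obstacle here: the argument is essentially the standard Schreier-graph trick. The only point requiring mild care is that reversing a path and then translating it still yields a legitimate path in $E'$, which is immediate from $G$-invariance of $E'$ and the fact that edges are unoriented. Consequently, the $E' \subset E$ constructed above meets both requirements.
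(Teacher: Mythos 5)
Your construction coincides with the paper's proof: it fixes a finite generating set, chooses paths in $(G/L,E)$ from $eL$ to each generator coset $s_iL$, and takes $E'$ to be the union of the $G$-orbits of the edges occurring in these paths. You additionally spell out the cocompactness and connectivity verifications that the paper leaves implicit, and these are correct (only note that for $\varepsilon_j=-1$ the reversed path should be translated by $s_{i_1}^{\varepsilon_1}\cdots s_{i_{j-1}}^{\varepsilon_{j-1}}s_{i_j}^{-1}$ rather than by the prefix of length $j-1$, a harmless adjustment).
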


It is worth mentioning that in Lemma~\ref{lemma:blow-up}, we could also be more indifferent
about the choice of edges of the seond type, more precisely, we could define the edge set $E$ to be 
\begin{align*}
		E \coloneqq 
			&\big\{
			\{[(g,h_1L)], [(g, h_2L)]\} \mid g\in G, \{h_1L,h_2L\} \in E_{H/L} 
		\big\}\\
		\cup &
		\big\{
			\{[(g_1, eL)], [(g_2, eL)]\} \mid g_1,g_2\in G, \{g_1H, g_2H\} \in E_{G/H}
		\big\}.
\end{align*}

However, with the choice as in Lemma~\ref{lemma:blow-up}, the resulting graph
coincides with the following construction.	

\begin{remark}[]
	The graph $\Omega_{G/H} \coloneqq (G/H, E_{G/H})$ is a $G$-CW complex of 
	dimension~$1$, i.e.\
	a pushout of the following type 		
	\[
	\begin{tikzcd}
		\coprod_{I_1} G/H \times S^0 \ar[r] \ar[d, hook] 			&  G/H \ar[d]  \\
		 \coprod_{I_1} G/H \times D^1 \ar[r]						&  \Omega_{G/H}
	\end{tikzcd}
	\]
	for some index set $I_1$.
	The complex $\Omega$ is then obtained by replacing $G/H$ with the induction of $\Omega_{H/L}
	 \coloneqq (H/L, E_{H/L})$, i.e.,
	$\Omega$ is (the $1$-skeleton of) a pushout as follows
	\[
	\begin{tikzcd}
		\coprod_{I_1} (G\times_H \Omega_{H/L}) \times S^0 \ar[r] \ar[d, hook] 	&  G\times_H \Omega_{H/L} \ar[d]  \\
		 \coprod_{I_1} (G\times_H \Omega_{H/L}) \times D^1 \ar[r]					&  \widetilde\Omega.
	\end{tikzcd}
	\]
	
	In Lemma~\ref{lemma:blow-up}, we just give an explicit description of the resulting edge set.

	This construction was inspired by a similar construction to build classifying spaces: 
	If $N\trianglelefteq 
	G$ is a normal subgroup, and the $G$-CW complex $E(G/N)$ is given, we can blow up by~$G \times_N EN$ 
	(where $EN$ is a model for the classifying space of $N$) to obtain a model of~$EG$. 
	An instance of this method in a slightly different setting is elaborated in an article by Lück 
	and Weiermann \cite[Proof of Proposition~5.1]{Lueck-Weiermann-vcyc}.
\end{remark}

\section{Application to the \coreb}
\label{sec:app-coreb}

In this section, we will prove Theorem~\ref{thm:main:in-am-coreb}. For completeness, we provide
the definition of the \coreb, which is a property of groups, in Appendix~\ref{sec:careb}. However, for the following proof, it suffices
to understand the following two properties of this notion.

\begin{prop}[{{\cite[Lemma~10.10]{ABFG}}}]
	\label{prop:Z-careb}
	The group $\Z$ has the \careb\ for all $\alpha \in \N$.
\end{prop}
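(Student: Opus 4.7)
The plan is to directly verify the \careb\ for $\Z$ by exhibiting a canonical minimal rebuilt model. Since $\Z$ is infinite cyclic, every finite-index subgroup has the form $n\Z$ with $n \in \N_{>0}$, and any Farber sequence $(\Gamma_i)_{i\in \N}$ forces $[\Z : \Gamma_i] \to \infty$. The key observation is that $\Z$ admits the circle $S^1$---with a single $0$-cell and a single $1$-cell---as a finite model of $B\Z$, and each $B\Gamma_i$ is again homotopy equivalent to this same minimal model $S^1$.

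First I would set up the initial data: let $X \coloneqq S^1$ with its minimal CW-structure, viewed as $B\Z$. The covering $X_i \to X$ associated to $\Gamma_i = n_i\Z$ is an $n_i$-fold cover of $S^1$, equipped with the pulled-back CW-structure carrying $n_i$ vertices and $n_i$ edges; this is the ``large'' starting model of $B\Gamma_i$ that the \careb\ asks us to rebuild.

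Second, I would take as the rebuilt model $Y_i$ the minimal CW-structure on $S^1$, now regarded as $B\Gamma_i$ via the isomorphism $n_i\Z \cong \Z$. This model has exactly two cells in total, independently of $i$. Consequently, for every $\alpha \in \N$, the normalized cell count ratio satisfies
\[
    \frac{\#\{\text{cells of } Y_i \text{ of dimension} \leq \alpha\}}{[\Z : \Gamma_i]} \leq \frac{2}{n_i} \longrightarrow 0,
\]
so the cell-count condition is satisfied for all $i$ large enough. On the analytic side, the cellular boundary operators of $Y_i$ all vanish (there are no cells in dimensions $\geq 2$, and $\partial_1 = 0$ since the loop begins and ends at the unique vertex), so their norms are trivially bounded. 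The comparison chain map from $X_i$ to $Y_i$---the obvious collapse identifying all $n_i$ vertices and all $n_i$ edges---is a cellular homotopy equivalence, and an explicit chain homotopy can be written by ``summing consecutive translates'', giving a $\Z[\Gamma_i]$-linear map whose operator norm is bounded by an absolute constant.

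The main obstacle is purely bookkeeping: aligning this construction with the precise quantitative norm and homotopy conditions laid out in the definition of the \careb\ recalled in Appendix~\ref{sec:careb}. Conceptually, however, the statement is essentially trivial, since $\Z$ already has a finite classifying space and no genuine rebuilding is required beyond reusing the minimal model for every term of the Farber sequence.
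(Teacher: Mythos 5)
There is a genuine gap, and it sits exactly where you dismiss the issue as ``purely bookkeeping''. Note first that the paper does not prove this proposition at all: it is quoted from \cite[Lemma~10.10]{ABFG}, so the comparison is with that cited argument. The quantifier structure of the \careb\ is: the constant $\kappa_X$ is fixed once and for all, then for each \emph{fixed} $T\ge 1$ there must be a Farber neighbourhood $U$ such that \emph{every} cover $Y\to X$ with $\pi_1(Y)\in U$ admits a rebuilding $Y'$ satisfying both the cells bound $|Y'^{(j)}|\le \kappa_X T^{-1}|Y^{(j)}|$ and the norms bound $\log\|g_j\|,\log\|h_j\|,\log\|\rho_{j-1}\|,\log\|\partial'_j\|\le\kappa_X(1+\log T)$. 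Since any Farber neighbourhood of $\Z$ contains subgroups $n\Z$ with $n$ arbitrarily large, these norm bounds must hold uniformly in $n$ for fixed $T$. Your choice of $Y'$ as the one-vertex circle makes this impossible: any cellular map from the $n$-fold cover (which has $n$ vertices) to the one-vertex circle sends all $n$ vertices to the unique vertex, so $\|g_0\|=\sqrt{n}$, and the chain homotopy $\rho_0$ between $\mathrm{id}$ and $h\circ g$ must push each vertex along a path of combinatorial length up to roughly $n/2$, so its norm also grows with $n$. Hence $\log\|g_0\|$ and $\log\|\rho_0\|$ eventually exceed $\kappa_X(1+\log T)$, and your claim that the homotopy is ``bounded by an absolute constant'' is false. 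Your normalised cell-count computation (dividing by $[\Z:\Gamma_i]$ and letting $i\to\infty$) also misreads the definition, which is a uniform statement for each fixed $T$, not an asymptotic one along a Farber sequence; that part happens to be repairable, but it signals the same misunderstanding.

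The correct argument (essentially the cited one) rebuilds only \emph{partially}: for the cover with $n$ edges one takes $Y'$ to be a circle subdivided into roughly $n/T$ edges, with $\mathbf{g}$ collapsing blocks of about $T$ consecutive edges, $\mathbf{h}$ spreading each edge of $Y'$ over the corresponding block, and the homotopy moving each vertex only within its block. Then $\|g_0\|,\|h_1\|\approx\sqrt{T}$, $\|\rho_0\|\le T$, $\|g_1\|,\|h_0\|,\|\partial'_1\|\le 2$, so all logarithms are bounded by a universal multiple of $1+\log T$, while the cell count drops by the required factor $T$. The whole content of the \careb\ is this tension between shrinking the complex and keeping the comparison maps' norms polynomial in $T$ independently of the index $n$; collapsing all the way to the minimal model destroys the norm control.
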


\begin{prop}[{{\cite[Example~10.12]{ABFG}}}]
	\label{prop:coreb-act-graph}
	Let $\Gamma$ be a residually finite group that acts cocompactly 
	on a graph $\Omega$ such that 
	\begin{itemize}
		\item vertex stabilisers have the \coreb, and
		\item edge stabilisers are infinite.
	\end{itemize}
	Then, $\Gamma$ has the \coreb.
\end{prop}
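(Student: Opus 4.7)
The plan is to leverage the cocompact action $\Gamma \actson \Omega$ to realise $B\Gamma$ as a graph of classifying spaces, and then to transfer the \coreb\ from the vertex stabilisers to $\Gamma$, exploiting the infiniteness of the edge stabilisers to absorb the edge contributions. Since $\Omega/\Gamma$ is a finite graph, I would pick finite sets of orbit representatives $\{v_1,\dots,v_r\}$ for vertices and $\{e_1,\dots,e_s\}$ for edges, together with the inclusions $\Gamma_{e_k}\hookrightarrow \Gamma_{v}$ at each endpoint. Gluing chosen models $B\Gamma_{v_j}$ along the mapping cylinders of $B\Gamma_{e_k}\to B\Gamma_{v}$, and attaching $2$-cells to kill cycles in $\Omega/\Gamma$ if needed, yields an equivariant CW model realising $B\Gamma$ at least in the degrees relevant to $1$-dimensional rebuilding.

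Given a Farber sequence $(\Gamma_i)_{i\in\N}$ of $\Gamma$, the associated cover of this model decomposes in the same combinatorial pattern: each $\Gamma$-orbit $\Gamma/\Gamma_{v_j}$ splits into finitely many $\Gamma_i$-orbits whose point stabilisers form a Farber-like family of finite-index subgroups of $\Gamma_{v_j}$, and similarly for edges. Invoking the \coreb\ of each $\Gamma_{v_j}$ (valid once $i$ is sufficiently large), one replaces every vertex piece by a model whose $0$- and $1$-cell counts, as well as the norms on boundary and homotopy operators, satisfy the cheap bounds. For the edge pieces, the key observation is that since $\Gamma_{e_k}$ is infinite, the indices $[\Gamma_{e_k}:\Gamma_{e_k}\cap g\Gamma_ig^{-1}]$ grow without bound along the Farber sequence, so the number of edge cylinders produced per unit of $[\Gamma:\Gamma_i]$ tends to $0$. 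This makes the total edge contribution cheap for free, independently of any rebuilding assumption on the edge stabilisers.

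The technical heart, and the main obstacle I expect, is ensuring that the rebuildings at adjacent vertex pieces are compatible along their common edge: the efficient model of $B\Gamma_{v_j}$ must restrict coherently on the subspace corresponding to a neighbouring edge to the efficient model chosen at the other endpoint, and the boundary and homotopy norms must glue together. I would attack this either via a relative version of the \coreb\ for pairs $(B\Gamma_v, B\Gamma_e)$, or, more robustly, by absorbing the incompatibilities into additional homotopies supported in thickened edge cylinders, whose cost is again paid for by the infiniteness of $\Gamma_{e_k}$. Once this compatibility is arranged, assembling the pieces gives a model of $B\Gamma_i$ with cell counts and operator norms in the cheap range, establishing the \coreb\ for $\Gamma$.
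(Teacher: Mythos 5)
First, a point of comparison: the paper does not prove this proposition at all --- it is imported verbatim from \ABFG\ (\cite[Example~10.12]{ABFG}, an instance of their general Theorem~10.9 on cocompact actions whose cell stabilisers have rebuilding properties with a degree shift). So what you are sketching is a re-proof of the cited machinery. Your outline does follow the same general lines as their argument (assemble rebuildings of the vertex pieces over the finite quotient, with edge data entering one degree higher so that only ``degree-$0$ cheapness'', i.e.\ infiniteness, is demanded of edge stabilisers), but as a proof it has genuine gaps rather than being a complete alternative route.

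Concretely: (a) the claim that the edge contribution is cheap ``for free'' is not free. The Farber condition only says that for each fixed $\gamma\neq e$ the proportion of fixed cosets tends to $0$; it does not directly give that $[\Gamma_{e}:\Gamma_{e}\cap g\Gamma_i g^{-1}]$ grows uniformly over double cosets, nor that the number of $\Gamma_i$-orbits of edges is $o([\Gamma:\Gamma_i])$. Turning ``$\Gamma_e$ is infinite'' into such a sublinear count needs a uniform witness argument (finite sets of nontrivial elements of $\Gamma_e$ meeting every small-index intersection --- delicate when $\Gamma_e$ is torsion or not finitely generated), and this is exactly the content of the cheap $0$-rebuilding property of infinite groups in \cite{ABFG}; moreover one must control not only the number of edge cylinders but also the $0$-cells inside each edge piece of the cover and the norms of the attaching and homotopy data. (b) The compatibility of the rebuildings of adjacent vertex pieces, with a single constant $\kappa$ and a single Farber neighbourhood $U(X,T)$, is what you yourself call the technical heart, and neither proposed fix is carried out; in particular you must prove that the intersections $\Gamma_{v}\cap g\Gamma_i g^{-1}$ eventually lie in the Farber neighbourhood furnished by the \coreb\ of $\Gamma_v$, a restriction statement that does not come for free from $(\Gamma_i)$ being Farber in $\Gamma$. (c) The model itself is shaky: gluing the $B\Gamma_{v}$ along mapping cylinders of $B\Gamma_{e}\to B\Gamma_{v}$ over $\Omega/\Gamma$ produces a $K(\Gamma,1)$ only when $\Omega$ is a tree (otherwise its fundamental group surjects onto $\Gamma$ with kernel coming from $\pi_1(\Omega)$, and attaching $2$-cells destroys asphericity), and since edge stabilisers need not be finitely generated the cylinders may already have infinite $1$-skeleton, violating the finiteness required of the $\alpha$-skeleton. \ABFG\ avoid both problems by working only with the relevant skeleta of an $(\alpha-1)$-connected $\Gamma$-complex. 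Without these steps your argument essentially restates the result it is meant to prove; citing \cite[Example~10.12]{ABFG}, as the paper does, or reproducing their Theorem~10.9 in detail, is what is actually needed.
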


We can now prove the following generalisation 
of an inheritance property for normal subgroups \cite[Corollary~10.13 (2)]{ABFG}.

\begin{lemma}[transport through $q$-normality]
	\label{lemma:coreb-inherit-q-normal}
	Let $\Gamma$ be a finitely generated, residually finite group and $L\leq_q \Gamma$ be a $q$-normal subgroup.
	If $L$ has the \coreb, then so does $\Gamma$.
\end{lemma}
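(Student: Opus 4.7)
The plan is to combine the blow-up action from Section~\ref{sec:act-q-normal} with the inheritance criterion in Proposition~\ref{prop:coreb-act-graph}. Concretely, from $L \le_q \Gamma$ with $\Gamma$ finitely generated, I would invoke Lemma~\ref{lemma:act-q-normal-direct} to produce a cocompact action of $\Gamma$ on a connected graph $(\Gamma/L, E)$ with all edge stabilisers infinite. The vertex stabilisers under this left-translation action are the conjugates $gLg^{-1}$, each of which is isomorphic to $L$ and therefore inherits the \coreb\ from $L$. Since $\Gamma$ is residually finite by hypothesis, all the assumptions of Proposition~\ref{prop:coreb-act-graph} are met, and the conclusion that $\Gamma$ has the \coreb\ follows immediately.

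The only sanity check is that the action provided by Lemma~\ref{lemma:act-q-normal-direct} really satisfies the hypotheses of Proposition~\ref{prop:coreb-act-graph}: cocompactness and connectedness are part of that lemma's statement, and the infiniteness of edge stabilisers is precisely the content of the $q$-normality witness. No new computation with boundary norms or cellular complexes is needed at this stage, since all the analytic work is encapsulated in Proposition~\ref{prop:coreb-act-graph}.

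I do not foresee a substantive obstacle here; the lemma is essentially a one-line corollary of the two structural results cited in the section. The only place where one could get tripped up is the implicit claim that \coreb\ is an isomorphism invariant, so that $gLg^{-1}$ has the property because $L$ does, but this is immediate from the definition (see Appendix~\ref{sec:careb}).
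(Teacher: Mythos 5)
Your proposal is correct and follows exactly the paper's own argument: apply Lemma~\ref{lemma:act-q-normal-direct} to get the cocompact action of $\Gamma$ on the connected graph $(\Gamma/L,E)$ with infinite edge stabilisers, note that the vertex stabilisers are conjugates of $L$ and hence have the \coreb, and conclude via Proposition~\ref{prop:coreb-act-graph}. No gaps; your remark that the \coreb\ passes to conjugates is the same (implicit) observation the paper makes.
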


\begin{proof}
	It suffices to show the hypotheses of Proposition~\ref{prop:coreb-act-graph}.
	By Lemma~\ref{lemma:act-q-normal-direct}, $\Gamma$ acts cocompactly on a connected graph $\Omega \coloneqq (\Gamma/L, E)$
	such that all edge stabilisers are infinite. All vertex stabilisers are conjugates of $L$, thus
	by assumption have the \coreb. 
\end{proof}

\begin{thm}[]
	\label{thm:in-am-coreb}
	Let $\Gamma$ be a finitely generated, virtually torsion-free, residually finite group
	that is inner-amenable and \namen. Then, $\Gamma$ has the \coreb.
\end{thm}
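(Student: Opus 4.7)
The plan is to assemble the theorem from three previously established pieces: the structure theorem for inner-amenable groups (Theorem~\ref{thm:structure-inner-am}), the fact that $\Z$ has the \coreb\ (Proposition~\ref{prop:Z-careb}), and the transport lemma through $q$-normality (Lemma~\ref{lemma:coreb-inherit-q-normal}). Since the hypotheses of Theorem~\ref{thm:in-am-coreb} coincide with those of Theorem~\ref{thm:structure-inner-am}, I would start by invoking the latter to obtain an element $g \in \Gamma\setminus\{e\}$ and finitely generated non-amenable subgroups $L \subseteq K \subseteq \Gamma$ with
\[
    \Z \cong \langle g \rangle \le Z(L) \trianglelefteq L \le_q K \le_q \Gamma.
\]

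Next, I would climb this chain one step at a time. As a base case, $\langle g\rangle \cong \Z$ has the \coreb\ by Proposition~\ref{prop:Z-careb}. Because $\langle g\rangle$ is central in $L$ and infinite, it is a normal, and hence $q$-normal, subgroup of $L$ (Example~\ref{ex:normal-impl-q-normal}); since $L$ is finitely generated and, as a subgroup of the residually finite group $\Gamma$, is itself residually finite, Lemma~\ref{lemma:coreb-inherit-q-normal} yields that $L$ has the \coreb. Applying the same lemma to the inclusions $L \le_q K$ and $K \le_q \Gamma$ — both cases in which the larger group is finitely generated (by Theorem~\ref{thm:structure-inner-am} for $K$, by hypothesis for $\Gamma$) and residually finite (by passage to subgroups for $K$, by hypothesis for $\Gamma$) — propagates the \coreb\ first to $K$ and then to $\Gamma$. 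This is precisely an instance of the scheme recorded in Theorem~\ref{thm:Tucker-Drob-method}.

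I do not expect a genuine obstacle here: the difficult work has been done already, both in establishing the structure theorem and in proving that $q$-normality transports the \coreb. The one subtlety worth double-checking is that the virtual torsion-freeness of $\Gamma$ (used in Theorem~\ref{thm:structure-inner-am} to guarantee that $g$ has infinite order and hence $\langle g \rangle \cong \Z$) is needed only at that single juncture; the intermediate groups $L$ and $K$ need not themselves be torsion-free, and Lemma~\ref{lemma:coreb-inherit-q-normal} makes no such demand, so the three-step ascent goes through unobstructed.
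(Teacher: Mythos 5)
Your proposal is correct and follows essentially the same route as the paper: the paper packages the three-step ascent into the abstract scheme of Theorem~\ref{thm:Tucker-Drob-method} (with $P$ being the \coreb, base case Proposition~\ref{prop:Z-careb}, inductive step Lemma~\ref{lemma:coreb-inherit-q-normal}), while you unwind that scheme explicitly along the chain from Theorem~\ref{thm:structure-inner-am} --- the same ingredients applied in the same order. Your checks that $L$ and $K$ are finitely generated and residually finite, and that virtual torsion-freeness is only needed to ensure $\langle g\rangle\cong\Z$, are exactly the points the paper's argument relies on.
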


\begin{proof}
	We apply Theorem~\ref{thm:Tucker-Drob-method} to the 
	class of finitely generated, virtually torsion-free, residually finite groups and the property $P$ 
	`has the \coreb'. It suffices therefore to show that the two hypotheses are
	satisfied.
	
	The group $\Z$ has the \coreb\ by Proposition~\ref{prop:Z-careb}. The second 
	condition is satisfied by Lemma~\ref{lemma:coreb-inherit-q-normal}.
\end{proof}

\begin{remark}[]
	\label{rem:attain-q-coreb}
	Lemma~\ref{lemma:coreb-inherit-q-normal} shows the \coreb\ for a larger class of groups. 
	If $\Gamma$ is a finitely generated, residually finite group and there exist 
	finitely generated subgroups $G_0, \dots, G_n$ such that 
	\[
		\Z \cong G_0 \le_q \dots \le_q G_n = \Gamma,
	\] 
	then $\Gamma$ has the \coreb. 
\end{remark}

\begin{question}[]
	Which groups can be obtained via a sequence of $q$-normal subgroups as in the above remark?
\end{question}

This class contains all virtually torsion-free, inner-amenable and \namen\ groups by Theorem~\ref{thm:structure-inner-am}. This inclusion is strict, as e.g.\ Example~\ref{ex:han}
shows. Moreover, the following groups satisfy 
this condition, recovering a result of \ABFG\ \cite[Proposition~10.15]{ABFG}
where an additional hypothesis of finite generation for some normalisers was necessary.

\begin{cor}
	\label{cor:chain-commuting-coreb}
	Let $\Gamma$ be a residually finite group that is chain-commuting, i.e., there is a finite
	generating set $\{\gamma_1, \dots, \gamma_m\}$ of elements of infinite order such that for 
	all $i~\in~\{1,\dots, m-1\}$, we have $[\gamma_i, \gamma_{i+1}] = e$. Then, $\Gamma$ has the \coreb.
\end{cor}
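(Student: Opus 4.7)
The plan is to reduce the statement to Remark~\ref{rem:attain-q-coreb} by exhibiting a finite chain
\[
\Z \cong G_1 \le_q G_2 \le_q \cdots \le_q G_m = \Gamma
\]
of finitely generated subgroups. The natural candidates, suggested directly by the chain-commuting generating set, are $G_i \coloneqq \langle \gamma_1, \dots, \gamma_i\rangle$. By construction each $G_i$ is finitely generated, $G_m = \Gamma$, and $G_1 \cong \Z$ since $\gamma_1$ has infinite order.

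The content of the argument lies in verifying that $G_i \le_q G_{i+1}$ for every $i\in \{1,\dots,m-1\}$, with $S \coloneqq \{\gamma_1,\dots,\gamma_{i+1}\}$ as the witnessing generating set of $G_{i+1}$. For each $\gamma_j$ with $j\le i$, I would observe that $\gamma_j \in G_i$, so $\gamma_j G_i \gamma_j^{-1} = G_i$, and this intersection equals $G_i$, which is infinite because $\gamma_i$ has infinite order. For the remaining generator $\gamma_{i+1}$, the relation $[\gamma_i,\gamma_{i+1}] = e$ says exactly that $\gamma_{i+1}$ centralises $\gamma_i$, so $\langle\gamma_i\rangle \subseteq \gamma_{i+1} G_i \gamma_{i+1}^{-1} \cap G_i$, making this intersection infinite as well.

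With the chain established, Remark~\ref{rem:attain-q-coreb} directly delivers the \coreb\ for $\Gamma$, using that $\Gamma$ is finitely generated and residually finite by hypothesis. I do not anticipate a genuine obstacle: the design feature of chain-commutativity is that consecutive commutation $[\gamma_i,\gamma_{i+1}] = e$ is precisely what is needed to push $q$-normality up one step in the chain. This also explains why the argument recovers~\cite[Proposition~10.15]{ABFG} without any additional finite-generation assumption on normalisers, because the whole strategy proceeds through $q$-normality rather than through plain normality.
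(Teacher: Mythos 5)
Your proposal is correct and follows essentially the same route as the paper: the chain $\Z \cong \langle \gamma_1\rangle \le_q \langle\gamma_1,\gamma_2\rangle \le_q \dots \le_q \Gamma$ with $q$-normality witnessed by the commutation $[\gamma_i,\gamma_{i+1}]=e$, concluded by repeated application of Lemma~\ref{lemma:coreb-inherit-q-normal} (packaged in Remark~\ref{rem:attain-q-coreb}). Your extra check that the subgroups $G_i$ inherit residual finiteness and are finitely generated is exactly what makes that repeated application legitimate.
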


\begin{proof}
	We have that 
	\[
		\Z \cong \langle \gamma_1 \rangle \le_q \langle \gamma_1, \gamma_2 \rangle 
		\le_q \dots \le_q \langle \gamma_1, \dots, \gamma_m\rangle = \Gamma.
	\]
	Here, $q$-normality is witnessed by the given generating sets and the fact that
	because $\gamma_i$ and $\gamma_{i+1}$ commute, we have
	\[
		\Z \cong \langle \gamma_{i} \rangle \subseteq 
			\gamma_{i+1} \cdot \langle \gamma_1, \dots, \gamma_i\rangle  \cdot \gamma_{i+1}^{-1}
			\cap \langle \gamma_1, \dots, \gamma_i\rangle.
	\]
	Thus, Proposition~\ref{prop:Z-careb} and repeated application of Lemma~\ref{lemma:coreb-inherit-q-normal}
	imply that $\Gamma$ has the \coreb.
\end{proof}

Examples of chain-commuting groups are right-angled Artin groups with connected nerve.
As Damien Gaboriau pointed out to us, in fact all Artin groups with connected nerve
are chain-commuting. They thus have the \coreb, provided that they are
residually finite, recovering a 
special case of a result by \ABFG\ \cite[Theorem~10.17]{ABFG}. 

\begin{cor}
	\label{cor:coreb-of-Artin-groups-conn-nerve}
	Let $\Gamma$ be an Artin group with connected nerve. Then, $\Gamma$ is chain-commuting.
	In particular, if $\Gamma$ is residually finite, it has the \coreb.
\end{cor}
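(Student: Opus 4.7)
The plan is to verify the chain-commuting condition by hand and then to invoke Corollary~\ref{cor:chain-commuting-coreb}. Let $\Gamma$ be an Artin group with standard generators $s_1,\dots,s_n$; the nerve $N$ has vertex set $\{s_1,\dots,s_n\}$ and an edge between $s_i$ and $s_j$ precisely when there is a defining braid relation of finite length $m_{ij}$. Recall that each $s_i$ has infinite order, as it maps to a generator of a $\Z$-factor in the abelianisation $\Gamma^{\mathrm{ab}}\cong \Z^n$.

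First I would produce a commuting building block for every edge of $N$. For $\{s_i,s_j\}$ an edge of $N$, set
\[
  z_{ij} \coloneqq (s_i s_j)^{m_{ij}} \in \langle s_i, s_j\rangle.
\]
By the Brieskorn--Saito analysis of the dihedral Artin subgroup $\langle s_i, s_j\rangle$, the Garside element $\Delta$ of $\langle s_i, s_j\rangle$ is itself central when $m_{ij}$ is even, while $\Delta^2 = z_{ij}$ is central when $m_{ij}$ is odd (using $s_i \Delta = \Delta s_j$ and $s_j\Delta = \Delta s_i$). In either case $z_{ij}$ is central in $\langle s_i, s_j\rangle$, hence commutes with both $s_i$ and $s_j$. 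Moreover, the image of $z_{ij}$ in $\Gamma^{\mathrm{ab}}$ is non-trivial, so $z_{ij}$ has infinite order.

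Next I would use connectedness of $N$ to stitch these building blocks into a chain-commuting generating set. Fix a spanning tree $T$ of $N$ and let $s_{i_1}, s_{i_2}, \dots, s_{i_M}$ be the vertex sequence of a depth-first traversal of $T$; then consecutive entries span edges of $N$, and every standard generator appears. Interleaving yields the list
\[
  s_{i_1},\; z_{i_1 i_2},\; s_{i_2},\; z_{i_2 i_3},\; \dots,\; z_{i_{M-1} i_M},\; s_{i_M},
\]
which consists of infinite-order elements, contains all standard generators (hence generates $\Gamma$), and has the property that any two consecutive entries commute by construction. Thus $\Gamma$ is chain-commuting. If $\Gamma$ is moreover residually finite, Corollary~\ref{cor:chain-commuting-coreb} immediately delivers the \coreb.

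The main obstacle is the first step: producing an infinite-order element of $\langle s_i, s_j\rangle$ commuting with both $s_i$ and $s_j$ whenever $m_{ij}\ge 3$. Once this is in hand, the remaining work is a purely combinatorial walk through a connected graph and a direct appeal to Corollary~\ref{cor:chain-commuting-coreb}, neither of which is deep.
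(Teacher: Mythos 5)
Your proof is correct and follows essentially the same route as the paper: walk along the connected nerve and insert between consecutive standard generators a central element of the corresponding spherical two-generator subgroup (the paper quotes Brieskorn--Saito for the infinite cyclic centre, you exhibit $(s_is_j)^{m_{ij}}$ explicitly), then invoke Corollary~\ref{cor:chain-commuting-coreb}. One small slip: $\Gamma^{\mathrm{ab}}$ is not $\Z^n$ in general (an odd label $m_{ij}$ identifies $s_i$ with $s_j$ in the abelianisation), but the homomorphism $\Gamma\to\Z$ sending every standard generator to $1$ still shows that the $s_i$ and the $z_{ij}$ have infinite order, so the argument stands.
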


\begin{proof}
	Since the nerve of~$\Gamma$ is connected, we can find a finite sequence of standard generators 
	of $\Gamma$ such that for all two subsequent generators $s,t$, the corresponding vertices
	in the nerve $v_s, v_t$ are connected by an edge. Recall that this means that the 
	Artin group $\langle s, t\rangle_\Gamma$ generated by $s$ and $t$ is spherical. 
	This implies that the centre of $\langle s, t\rangle_\Gamma$ is infinite cyclic \cite[Satz~7.2]{Brieskorn-Saito-AGCG}. In particular, we can choose a generator $\gamma_{s,t}$ in this centre.
	Then, the sequence $(s, \gamma_{s,t}, t)$ is chain-commuting. Choosing a generator of the
	centre for every two subsequent standard generators, we obtain a set of generators of~$\Gamma$
	that is chain-commuting.
	
	We obtain that $\Gamma$ has the \coreb\ by Corollary~\ref{cor:chain-commuting-coreb}.
\end{proof}

\section{Generalisations and Outlook}
\label{sec:outlook}

In Question~\ref{q:higher-degrees-careb}, we asked if the result of Theorem~\ref{thm:main:in-am-coreb}
generalises to higher degrees. We don't expect this to be the case. 
If it does, it would require new insights about inner-amenable groups. By Tucker-Drob's strategy
(Theorem~\ref{thm:Tucker-Drob-method}), a natural approach would be to prove an analogue 
of Lemma~\ref{lemma:coreb-inherit-q-normal} in higher degrees. However, 
already in degree~2, this fails.

\begin{example}[]
	\label{ex:han}
	We have that $\Z \trianglelefteq \Z \times F_2 \le_q F_2 \times F_2$, 
	and $\Z$ as well as $\Z \times F_2$ have the \careb\ for all $\alpha \in \N$ \cite[Lemma~10.10, Corollary~10.13(2)]{ABFG}. The group $F_2 \times F_2$ does \emph{not} have the \ctreb, because 
	its second $\ell^2$-Betti number is positive \cite[Theorem~10.20]{ABFG}. 
	Note that $F_2 \times F_2$ is \emph{not} inner-amenable \cite[Théorème~5]{Bedos-Harpe-inner-am} 
	(alternatively, by Theorem~\ref{thm:raag-inam}).
\end{example}

However, the desired generalisation holds for right-angled Artin groups.
For an introduction to these groups, 
we refer to a survey by Charney \cite{Charney-raags}.
The following is known about inner amenability of right-angled Artin groups.

\begin{thm}[{{\cite[Corollary~4.21]{Duchesne-Cat0}}}]
	\label{thm:raag-inam}
	A right-angled Artin group is inner-amenable if and only if it splits
	as a direct product with $\Z$.
\end{thm}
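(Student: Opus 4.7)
The ``if'' direction is immediate from Example~\ref{ex:inner-amenable}: since $\Z$ is infinite amenable and therefore inner-amenable by part~(1), the direct product $H \times \Z$ is inner-amenable by part~(2).

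For the converse, I would first translate the splitting into a combinatorial condition on the defining graph~$\Gamma$. By Servatius' computation of centralisers in right-angled Artin groups, the centre $Z(A_\Gamma)$ is freely generated by the vertices of $\Gamma$ adjacent to all other vertices, so $A_\Gamma$ splits as a direct product with $\Z$ if and only if $\Gamma$ admits such a \emph{universal vertex}, equivalently if and only if $Z(A_\Gamma) \neq \{e\}$. The task thus reduces to showing, contrapositively, that if $\Gamma$ has no universal vertex then $A_\Gamma$ is not inner-amenable. I would split into two cases. If $\Gamma$ is disconnected, then $A_\Gamma$ is a non-trivial free product of the sub-RAAGs on its connected components, which are infinite and torsion-free; non-inner-amenability then follows from a theorem of Bédos and de la Harpe extending Example~\ref{ex:inner-amenable}(13). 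If $\Gamma$ is connected without a universal vertex (and has at least two vertices), then results of Kim and Koberda, and of Minasyan and Osin, imply that $A_\Gamma$ is acylindrically hyperbolic with trivial finite radical.

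The main obstacle is closing this second case. One approach is to invoke the strengthened Rosenblatt theorem used in the proof of Theorem~\ref{thm:structure-inner-am}: if $A_\Gamma$ were inner-amenable with atomless invariant mean $\mu$, then for $\mu$-almost every $g$ the centraliser $C_{A_\Gamma}(g)$ would be non-amenable. However, in an acylindrically hyperbolic group each loxodromic element has virtually cyclic centraliser (Dahmani, Guirardel and Osin), and the set of non-loxodromic elements is a countable union of elementary subgroups which I would hope to show is $\mu$-negligible by exploiting conjugation-invariance. Verifying this negligibility using only atomlessness and conjugation-invariance of~$\mu$ is where I expect the argument to be most delicate; here more refined tools, such as the $C^*$-simplicity of acylindrically hyperbolic groups with trivial amenable radical established by Breuillard, Kalantar, Kennedy and Ozawa (which is known to obstruct inner amenability in many cases), might have to be brought in to complete the contrapositive.
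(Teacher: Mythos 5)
The paper does not prove this statement itself -- it quotes it from the reference \cite{Duchesne-Cat0}, where the argument goes through groups acting on CAT(0) cube complexes -- so your sketch has to stand on its own. The ``if'' direction and the reduction via Servatius' centraliser theorem (splitting off $\Z$ $\Leftrightarrow$ non-trivial centre $\Leftrightarrow$ universal vertex) are fine, as is the disconnected case via free products. The genuine gap is your second case: it is \emph{false} that a connected defining graph without a universal vertex forces the right-angled Artin group to be acylindrically hyperbolic. If the graph is a non-trivial join without a universal vertex -- already the $4$-cycle $C_4$ -- then the group is a direct product of two infinite groups, e.g.\ $F_2\times F_2$, and a direct product of infinite groups is never acylindrically hyperbolic; the Kim--Koberda/Minasyan--Osin results apply only when the graph is connected, has at least two vertices \emph{and is not a join}. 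So your case analysis omits exactly the direct-product case, and in particular does not cover $F_2\times F_2$, the very example the paper records as non-inner-amenable via this theorem (Example~\ref{ex:han}). To close this you would need an extra lemma of the shape: if $G_1\times G_2$ carries an atomless conjugation-invariant mean, then so does $G_1$ or $G_2$ (push the mean to $G_1$; an atom there has finite conjugacy class, so one can condition on that finite fibre and push to $G_2$, where atomlessness follows from atomlessness of the original mean), and then induct on the number of vertices, observing that a universal vertex of a join factor is universal in the whole graph.

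Even in the genuinely acylindrically hyperbolic case your argument is, as you say yourself, not closed: showing that the non-loxodromic elements form a null set for \emph{every} atomless conjugation-invariant mean is the real content, and the proposed fallback via $C^*$-simplicity and the Breuillard--Kalantar--Kennedy--Ozawa machinery cannot rescue it, because $C^*$-simplicity (and likewise the unique trace property, i.e.\ triviality of the amenable radical) is not an obstruction to inner amenability: non-solvable Baumslag--Solitar groups $\operatorname{BS}(m,n)$ with $|m|,|n|\ge 2$, $|m|\neq|n|$, are inner-amenable (Example~\ref{ex:inner-amenable}(6)) yet are known to be $C^*$-simple (de la Harpe--Pr\'eaux). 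So both halves of the converse need different input than what you cite; the cube-complex approach of the cited reference handles the connected non-join case directly, which is why the paper leans on it rather than on acylindrical hyperbolicity.
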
 

We obtain the following corollary.

\begin{cor}[]
	\label{cor:raag}
	Let $\Gamma$ be a right-angled Artin group.
	If $\Gamma$ is inner-amenable, then $\Gamma$ has the \careb\ for all $\alpha\in \N$.
	In particular, also the conclusions of Question~\ref{q:higher-degrees-vanish} hold for $\Gamma$.
\end{cor}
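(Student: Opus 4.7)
The plan is to reduce, via Theorem~\ref{thm:raag-inam}, to a direct product with $\Z$, and then apply an inheritance principle for cheap rebuilding along an infinite normal subgroup. This is in direct analogy with the situation of Example~\ref{ex:han}, where exactly the same mechanism is invoked to pass the \careb\ from $\Z$ up to $\Z \times F_2$.

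First, I would invoke Theorem~\ref{thm:raag-inam} to split the inner-amenable RAAG as $\Gamma \cong \Gamma_0 \times \Z$, where $\Gamma_0$ is again a RAAG. In particular, the second factor embeds as an infinite central (hence normal) subgroup of $\Gamma$, and RAAGs are residually finite and admit finite models for their classifying space (e.g.\ the Salvetti complex), so $\Gamma$ is of type $F_\infty$.

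Second, I would observe that the normal subgroup $\Z \trianglelefteq \Gamma$ has the \careb\ for every $\alpha \in \N$ by Proposition~\ref{prop:Z-careb}. Applying the inheritance principle along an infinite normal subgroup from \cite[Corollary~10.13(2)]{ABFG}, the same tool used in Example~\ref{ex:han}, then transfers the \careb\ from $\Z$ to~$\Gamma$, for each $\alpha \in \N$. The residual finiteness and finite-type hypotheses needed for that inheritance principle are supplied by the previous paragraph.

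Finally, to get the assertion about Question~\ref{q:higher-degrees-vanish}, I would apply \cite[Theorem~10.20]{ABFG} in each degree to translate the \careb\ into vanishing of Betti number gradients (for arbitrary field coefficients) and of log-torsion gradients in degree~$n \le \alpha$; since $\alpha$ is arbitrary, this covers all degrees. There is essentially no genuine obstacle in this proof: the structural reduction is already encoded in Theorem~\ref{thm:raag-inam}, and the remaining work is to check the easy background hypotheses (residual finiteness and finiteness properties of RAAGs) so that the cited inheritance result applies.
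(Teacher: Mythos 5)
Your proposal is correct and follows the same route as the paper: split $\Gamma$ as a direct product with $\Z$ via Theorem~\ref{thm:raag-inam}, then transfer the \careb\ from $\Z$ (Proposition~\ref{prop:Z-careb}, i.e.\ \cite[Lemma~10.10]{ABFG}) to $\Gamma$ using the normal-subgroup inheritance result \cite[Corollary~10.13(2)]{ABFG}, with \cite[Theorem~10.20]{ABFG} giving the gradient vanishing. Your added checks (residual finiteness and type $F_\infty$ of RAAGs via the Salvetti complex) are exactly the hypotheses the paper leaves implicit.
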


\begin{proof} 
	As $\Gamma$ is inner-amenable, it splits as a direct product with $\Z$
	by Theorem~\ref{thm:raag-inam}. We can then conclude by 
	two lemmas of \ABFG\ 
	\cite[Corollary~10.13(2)]{ABFG} \cite[Lemma~10.10]{ABFG}.
\end{proof}

\appendix

\section{Proofs of Section~\ref{sec:more-actions}}
\label{appx:proofs}

We present the missing proofs of Section~\ref{sec:more-actions}.

\begin{proof}[Proof of Lemma~\ref{lemma:act-q-normal-conj}]
	The only non-obvious claim is connectedness of the graph.
	It suffices to show that for $g\in G$, 
	there is a path from $eLe^{-1}$ to $gLg^{-1}$. Because of $q$-normality 
	(Definition~\ref{def:q-normal}), there is a generating set $S$ of $G$ such that for all $s\in S$, 
	the set $sLs^{-1}\cap L$ is infinite. We pick $s_1, \dots s_n \in S$ and $\epsilon_1, \dots
	\epsilon_n \in \{\pm 1\}$ such that $g = s_1^{\epsilon_1} \dots s_n^{\epsilon_n}$.
	Using that cardinalities are conjugation-invariant, i.e.\ if $sLs^{-1}\cap L$ is infinite, 
	then also for all $g'\in G$, we have that $g'sLs^{-1}g'^{-1} \cap g'Lg'^{-1}$ is infinite, we obtain that
	\[
		eLe^{-1}, s_1^{\epsilon_1}Ls_1^{-\epsilon_1}, \dots, 
		s_1^{\epsilon_1} \cdots s_n^{\epsilon_n} L s_n^{-\epsilon_n} \cdots s_1^{-\epsilon_1} = gLg^{-1}
	\]
	is a path in $\Omega$.	
\end{proof}

\begin{proof}[Proof of Lemma~\ref{lemma:blow-up}]
	By construction, we obtain an action $G \actson \Omega \coloneqq (G\times_H H/L, E)$
	by left-translation on the vertex set. This graph is
	 connected: Let $[(g, eL)] \in G\times_H H/L$. 
	Note that for all $f\in F$, we obtain the edges $\{[(g, eL)], [(g\cdot g(f), eL)]\}$ for 
	all $g\in G$. In particular, since $(G/H, E_{G/H})$ is connected, 
	we obtain a path from $[(g, eL)]$ to $[(h, eL)]$ for some $h\in H$.
	Now, because $(H/L, E_{H/L})$ is connected, we can find a path from 
	$[(h, eL)] = [(e,hL)]$ to $[(e,eL)]$. 
	Concatenating these two paths shows that the graph $\Omega$ is connected.
	 
	It remains to prove that edge stabilisers are infinite: 
	\begin{itemize}
		\item Let $\{[(g,h_1L)], [(g, h_2L)]\}\in E$ with $g\in G, \{h_1L,h_2L\} \in E_{H/L}$. 
		Because edge stabilisers in $(H/L, E_{H/L})$ are infinite, there are infinitely many $h\in H$ that 
		fix $\{h_1L,h_2L\}$.
		For all these $h\in H$, its conjugate $g\cdot h \cdot g^{-1}$ fixes $\{[(g,h_1L)], [(g, h_2L)]\}$. Thus, the stabiliser 
		of this edge is infinite.
		\item Let $g\in G, f\in F$ and consider the
		edge $\{[(g, eL)], [(g\cdot g(f), eL)]\}$.
		This edge is fixed by $g\cdot (L\cap g(f)\cdot L \cdot g(f)^{-1}) \cdot g^{-1}$ which, by the second hypothesis, 
		is an infinite set. \qedhere
	\end{itemize}
\end{proof}

\begin{proof}[Proof of Lemma~\ref{lemma:cocompact}]
	Because $G$ is finitely generated, we can pick a finite generating 
	set $g_1, \dots, g_n$ of $G$. Because $(G/L, E)$ is connected, we can pick paths
	from $eL$ to $g_iL$ for all $i\in \{1,\dots,n\}$. We then define $E'$ to be the union of the 
	$G$-orbits of all edges occuring in one of these paths.
\end{proof}

\section{\texorpdfstring{Cheap $\alpha$-rebuilding property}{Cheap a-rebuilding property}}
\label{sec:careb}

For completeness, we include the definition of the \careb, 
quality of rebuildings and Farber neighbourhoods.
We refer to the work of \ABFG\ \cite{ABFG} for more details and examples.

\begin{defi}[rebuilding {{\cite[Definition~1]{ABFG}}}]
Let $\alpha \in \mathbb N$ and let $Y$ be a CW-complex with 
finite $\alpha$-skeleton. An $\alpha$-\emph{rebuilding} of $Y$ 
is a tuple $(Y,Y',\mathbf{g}, \mathbf{h}, \mathbf{P})$,
consisting of the following data \
\begin{enumerate}
\item $Y'$ is a CW-complex with finite $\alpha$-skeleton, 
\item 
$\mathbf{g} \colon Y^{(\alpha)}\to Y'{}^{(\alpha)} \quad \mbox{and} \quad \mathbf{h} \colon Y'{}^{(\alpha )}\to Y^{(\alpha )}$
are cellular maps
that are homotopy inverse to each other up to dimension $\alpha-1$, i.e., $\mathbf{h} \circ \mathbf{g}_{\restriction Y^{(\alpha-1)}}\simeq \mathrm{id}_{\restriction Y^{(\alpha-1)}}$ within $Y^{(\alpha)}$ and 
$\mathbf{g} \circ \mathbf{h}_{\restriction Y'{}^{(\alpha-1)}}\simeq \mathrm{id}_{\restriction Y'{}^{(\alpha-1)}}$ within $Y'{}^{(\alpha)}$, and
\item a cellular homotopy $\mathbf{P} \colon [0,1] \times Y^{(\alpha-1)}\to Y^{(\alpha)}$  between the identity and $\mathbf{h} \circ \mathbf{g}$,
i.e., $\mathbf{P} (0, \cdot)=\mathrm{id}_{\restriction Y^{(\alpha-1)}}$ and $\mathbf{P} (1,\cdot)=\mathbf{h} \circ \mathbf{g}_{\restriction Y^{(\alpha-1)}}$.
\end{enumerate}
\end{defi}

\begin{defi}[quality of a rebuilding, {{\cite[Definition~2]{ABFG}}}]
Given real numbers~$T, \kappa\geq 1$, we say that an $\alpha$-rebuilding
 $(Y,Y',\mathbf{g}, \mathbf{h}, \mathbf{P})$ is  of 
\emph{quality $({T}, {\kappa})$} if we have for all $j\le \alpha$
\begin{align}
\quad |X'{}^{(j)}|  & \le  {\kappa}{T}^{-1}|X^{(j)}| \tag{cells bound}\\
\quad \log \|g_j \|,\log \| h_j \|,\log \| \rho_{j-1} \|,\log \|\partial'_{j} \|  & \le   {\kappa}(1+\log {T})
\tag{norms bound}
\end{align}
where $|\cdot |$ denotes the number of cells and $\partial'$ 
is the cellular boundary map on $Y'$, and $g$ and 
$h$ are the chain maps respectively associated to $\mathbf{g}$ and 
$\mathbf{h}$, and $\rho \colon C_\bullet(Y)\to C_{\bullet+1}(Y)$ is the chain homotopy induced by $\mathbf{P}$ in the cellular chain complexes: 
\begin{equation*}
\begin{tikzcd}[column sep=1.2em]
C_{\alpha}(Y) \arrow[r, "\partial_{\alpha}"] \arrow[d, "g_{\alpha}" left]
& \cdots \arrow[l, bend left, "\rho_{\alpha-1}" ] & 
\arrow[r] \cdots &  
C_{1}(Y) \arrow[r] \arrow[r, "\partial_1"] \arrow[d, "g_1" left] \arrow[l, bend left, "\rho_1" ] &
C_{0}(Y) \arrow[d, "g_0" left]\arrow[l, bend left, "\rho_0" ]  \\
C_{\alpha}(Y') \arrow[r, "\partial'_{\alpha}"] \arrow[u, xshift=0.35em, "h_{\alpha} " right]
&
\cdots 
& 
\arrow[r] \cdots &  
C_{1}(Y') \arrow[r, "\partial'_1 "] \arrow[u, xshift=0.35em, "h_1" right]&
C_{0}(Y'), \arrow[u, xshift=0.35em, "h_0" right]
\end{tikzcd}
\end{equation*}
and the norms $\| \cdot \|$ are the canonical $\ell^2$-norms on the cellular chain complexes
given by the basis of open cells.
\end{defi}

\begin{defi}[{{\cite[Section~10.1]{ABFG}}}]
	Let $\Gamma$ be a countable group and let
	$\operatorname{Sub}_\Gamma^\text{fi}$ denote
	the space of finite index subgroups of $\Gamma$
	with the topology induced from the topology 
	of pointwise convergence on $\{0,1\}^\Gamma$.
	For $\gamma\in \Gamma$, we consider 
	the following function.
	\[
		\operatorname{fx}_{\Gamma,\gamma}:
		\operatorname{Sub}_\Gamma^\text{fi} \to [0,1],
		\quad 
		\Gamma' \mapsto 
		\frac{|\{g\Gamma'\mid \gamma g \Gamma' = g \Gamma'\}|}{[\Gamma:\Gamma']}
	\]
\end{defi}

We also recall the definition of a Farber sequence.

\begin{defi}[Farber sequence {{\cite[Definition~10.1]{ABFG}}}]
	A sequence $(\Gamma_n)_{n\in \N}$ of subgroups
	of~$\Gamma$ is a \emph{Farber sequence} 
	if it consists of finite index subgroups and 
	for every $\gamma\in \Gamma\backslash \{e\}$,
	we have $\lim_{n\to\infty} \operatorname{fx}_{\Gamma,\gamma} (\Gamma_n) = 0$.
\end{defi}

Note that Farber sequences exist if and only 
if $\Gamma$ is residually finite. The most common 
example of Farber sequences are \emph{residual
chains}, i.e., nested sequences of finite index
normal subgroups whose intersection is trivial.

\begin{defi}[Farber neighbourhood  {{\cite[Definition~10.2]{ABFG}}}]
	Let $\Gamma$ be a residually finite group.
	An open subset $U\subseteq \operatorname{Sub}_\Gamma^\text{fi}$ is a \emph{$\Gamma$-Farber 
	neighbourhood} if it is invariant by the 
	conjugation action of $\Gamma$ on 
	$\operatorname{Sub}_\Gamma^\text{fi}$
	and every Farber sequence in $\operatorname{Sub}_\Gamma^\text{fi}$ eventually belongs to $U$.
\end{defi}

Finally, we can define the \careb.

\begin{defi}[\careb {{\cite[Definition~10.5]{ABFG}}}]
	Let $\Gamma$ be a countable group and $\alpha\in \N$. 
	Then, $\Gamma$ has the \emph{\careb\ }
	if it is residually finite and
	there is a $K(\Gamma,1)$-space $X$ with finite $\alpha$-skeleton 
	and a constant $\kappa_X\ge 1$ such that the following holds:
	For every real number $T\ge 1$, there exists a Farber 
	neighbourhood $U = U(X,T) \subset 
	\operatorname{Sub}_\Gamma^\text{fi}$ such that for every 
	finite covering $Y \to X$ with $\pi_1(Y) \in U$, there
	is an $\alpha$-rebuilding $(Y,Y')$ of quality $(T, \kappa_X)$.  
\end{defi}

{\small
  \bibliographystyle{alpha}
  \bibliography{references}}

\begin{thebibliography}{DTDW21}

\bibitem[ABFG24]{ABFG}
Miklós Abért, Nicolas Bergeron, Mikolaj Fr\k{a}czyk, and Damien Gaboriau.
\newblock On homology torsion growth.
\newblock {\em J. Eur. Math. Soc.}, 2024.

\bibitem[AOS21]{AOS-tors-hom-growth}
Grigori Avramidi, Boris Okun, and Kevin Schreve.
\newblock Mod {{\(p\)}} and torsion homology growth in nonpositive curvature.
\newblock {\em Invent. Math.}, 226(3):711--723, 2021.

\bibitem[Ati76]{Atiyah-L2}
Michael~F. Atiyah.
\newblock Elliptic operators, discrete groups and von {Neumann} algebras.
\newblock In {\em Colloque ``Analyse et Topologie'' en l'honneur de Henri
  Cartan.}, pages 43--72. Paris: Soci{\'e}t{\'e} Math{\'e}matique de France
  (SMF), 1976.

\bibitem[BdlH86]{Bedos-Harpe-inner-am}
Erik B{\'e}dos and Pierre de~la Harpe.
\newblock Moyennabilité intérieure des groupes: définitions et exemples.
\newblock {\em Enseign. Math. (2)}, 32:139--157, 1986.

\bibitem[BS72]{Brieskorn-Saito-AGCG}
Egbert Brieskorn and Kyoji Saito.
\newblock Artin-{Gruppen} und {Coxeter}-{Gruppen}.
\newblock {\em Invent. Math.}, 17:245--271, 1972.

\bibitem[CG86]{Cheeger-Gromov}
Jeff Cheeger and Mikhael Gromov.
\newblock {{\(L_ 2\)}}-cohomology and group cohomology.
\newblock {\em Topology}, 25:189--215, 1986.

\bibitem[Cha07]{Charney-raags}
Ruth Charney.
\newblock An introduction to right-angled {Artin} groups.
\newblock {\em Geom. Dedicata}, 125:141--158, 2007.

\bibitem[CSU16]{Chifan-Sinclair-Udrea2016}
Ionut Chifan, Thomas Sinclair, and Bogdan Udrea.
\newblock Inner amenability for groups and central sequences in factors.
\newblock {\em Ergodic Theory Dyn. Syst.}, 36(4):1106--1129, 2016.

\bibitem[DTDW21]{Duchesne-Cat0}
Bruno Duchesne, Robin~D. Tucker-Drob, and Phillip Wesolek.
\newblock {CAT}(0) cube complexes and inner amenability.
\newblock {\em Groups Geom. Dyn.}, 15(2):371--411, 2021.

\bibitem[DV18]{Deprez-Vaes2018}
Tobe Deprez and Stefaan Vaes.
\newblock Inner amenability, property {Gamma}, {McDuff} {{\(\text{II}_{1}\)}}
  factors and stable equivalence relations.
\newblock {\em Ergodic Theory Dyn. Syst.}, 38(7):2618--2624, 2018.

\bibitem[Eff75]{Effros}
Edward~G. Effros.
\newblock Property {{\(\Gamma\)}} and inner amenability.
\newblock {\em Proc. Am. Math. Soc.}, 47:483--486, 1975.

\bibitem[FFL21]{FFLodha21}
Francesco Fournier-Facio and Yash Lodha.
\newblock Second bounded cohomology of groups acting on $1$-manifolds and
  applications to spectrum problems.
\newblock arXiv:2111.07931, 2021.

\bibitem[HO17]{Haagerup2017-ThompsonTV}
Uffe Haagerup and Kristian~Knudsen Olesen.
\newblock Non-inner amenability of the {Thompson} groups {{\(T\)}} and
  {{\(V\)}}.
\newblock {\em J. Funct. Anal.}, 272(11):4838--4852, 2017.

\bibitem[Jol97]{Jolissaint-ThompsonF}
Paul Jolissaint.
\newblock Moyennabilité intérieure du groupe {F} de {Thompson}.
\newblock {\em Comptes Rendus de l'Académie des Sciences - Series I -
  Mathematics}, 325(1):61--64, 1997.

\bibitem[{Kam}19]{Kammeyer-L2}
Holger {Kammeyer}.
\newblock {\em {Introduction to \(\ell^2\)-invariants}}.
\newblock Springer, 2019.

\bibitem[Kid14]{Kida2014}
Yoshikata Kida.
\newblock Inner amenable groups having no stable action.
\newblock {\em Geom. Dedicata}, 173:185--192, 2014.

\bibitem[KKN17]{Kar-Kropholler-Nikolov}
Aditi Kar, Peter Kropholler, and Nikolay Nikolov.
\newblock On growth of homology torsion in amenable groups.
\newblock {\em Math. Proc. Camb. Philos. Soc.}, 162(2):337--351, 2017.

\bibitem[L{\"u}c94]{Lueck-Approx-Thm}
Wolfgang L{\"u}ck.
\newblock Approximating {{\(L^ 2\)}}-invariants by their finite-dimensional
  analogues.
\newblock {\em Geom. Funct. Anal.}, 4(4):455--481, 1994.

\bibitem[L{\"u}c13]{Lueck-Approx-L2-2013}
Wolfgang L{\"u}ck.
\newblock Approximating {{\(L^2\)}}-invariants and homology growth.
\newblock {\em Geom. Funct. Anal.}, 23(2):622--663, 2013.

\bibitem[LW12]{Lueck-Weiermann-vcyc}
Wolfgang L{\"u}ck and Michael Weiermann.
\newblock On the classifying space of the family of virtually cyclic subgroups.
\newblock {\em Pure Appl. Math. Q.}, 8(2):497--555, 2012.

\bibitem[Pop06]{Popa2006}
Sorin Popa.
\newblock Some computations of 1-cohomology groups and construction of
  non-orbit-equivalent actions.
\newblock {\em Journal of the Institute of Mathematics of Jussieu},
  5(2):309–332, 2006.

\bibitem[Sta06]{Stalder2006}
Yves Stalder.
\newblock Inner amenability and {HNN} extensions.
\newblock {\em Ann. Inst. Fourier}, 56(2):309--323, 2006.

\bibitem[TD20]{Tucker-Drob2020}
Robin~D. Tucker-Drob.
\newblock Invariant means and the structure of inner amenable groups.
\newblock {\em Duke Math. J.}, 169(13):2571--2628, 2020.

\bibitem[Vae12]{Vaes-inneram-not-Gamma}
Stefaan Vaes.
\newblock An inner amenable group whose von {Neumann} algebra does not have
  property {Gamma}.
\newblock {\em Acta Math.}, 208(2):389--394, 2012.

\end{thebibliography}

\vfill

\noindent
\emph{Matthias Uschold}\\[.5em]
  {\small
  \begin{tabular}{@{\qquad}l}
    Fakult\"at f\"ur Mathematik,
    Universit\"at Regensburg,
    93040 Regensburg, 
    Germany\\
    \textsf{matthias.uschold@mathematik.uni-r.de},
    \textsf{https://homepages.uni-regensburg.de/$\sim$usm34387/}
  \end{tabular}}

\end{document}